\def\eqref#1{equation~\ref{#1}}
\def\1{\bm{1}}
\def\rd{{\textnormal{d}}}
\DeclareMathAlphabet{\mathsfit}{\encodingdefault}{\sfdefault}{m}{sl}
\SetMathAlphabet{\mathsfit}{bold}{\encodingdefault}{\sfdefault}{bx}{n}
\newcommand{\E}{\mathbb{E}}
\newcommand{\R}{\mathbb{R}}
\newcommand{\cF}{\mathcal{F}}
\newcommand{\cP}{\mathcal{P}}
\newcommand{\cR}{\mathcal{R}}
\newcommand{\kl}[2]{\text{KL}(#1 \| #2)}
\newcommand*{\chis}[2]{\chi^2(#1\| #2)}
\newcommand*{\triplenorm}[1]{{\left\vert\kern-0.25ex\left\vert\kern-0.25ex\left\vert #1
    \right\vert\kern-0.25ex\right\vert\kern-0.25ex\right\vert}}
\newcommand{\Rd}{\mathbb{R}^d}
\renewcommand{\phi}{\varphi}
\newcommand{\sse}{\subseteq}
\newcommand*{\defeq}{\coloneqq}
\newcommand*{\dd}{\, \rd}
\newtheorem{lemma}{Lemma}
\newtheorem{theorem}{Theorem}
\newtheorem{proposition}{Proposition}
\newtheorem{remark}{Remark}
\title{An Explicit Expansion of the Kullback-Leibler Divergence along its Fisher-Rao Gradient Flow}
\author{\name Carles Domingo-Enrich \email cd2754@nyu.edu \\
      \addr Courant Institute of Mathematical Sciences\\
      New York University
      \AND
      \name Aram-Alexandre Pooladian \email aram-alexandre.pooladian@nyu.edu \\
      \addr Center for Data Science\\ New York University}
\begin{document}

\maketitle

\begin{abstract}
Let $V_* : \mathbb{R}^d \to \R$ be some (possibly non-convex) potential function, and consider the probability measure $\pi \propto e^{-V_*}$. 
When $\pi$ exhibits multiple modes, it is known that sampling techniques based on Wasserstein gradient flows of the Kullback-Leibler (KL) divergence (e.g. Langevin Monte Carlo) suffer poorly in the rate of convergence, where the dynamics are unable to easily traverse between modes. 
In stark contrast, the work of \cite{lu2019accelerating,lu2022birth}
has shown that the gradient flow of the KL with respect to the Fisher-Rao (FR) geometry exhibits a convergence rate to $\pi$ is that \textit{independent} of the potential function. 
In this short note, we complement these existing results in the literature by providing an explicit expansion of $\text{KL}(\rho_t^{\text{FR}}\|{\pi})$ in terms of $e^{-t}$, where $(\rho_t^{\text{FR}})_{t\geq 0}$ is the FR gradient flow of the KL divergence.
In turn, we are able to provide a clean asymptotic convergence rate, where the burn-in time is guaranteed to be finite.
Our proof is based on observing a similarity between FR gradient flows and simulated annealing with linear scaling, and facts about cumulant generating functions. 
We conclude with simple synthetic experiments that demonstrate our theoretical findings are indeed tight.
Based on our numerics, we conjecture that the asymptotic rates of convergence for Wasserstein-Fisher-Rao gradient flows are possibly related to this expansion in some cases.
\end{abstract}

\section{Introduction} \label{sec: intro}
Sampling from a distribution with an unknown normalization constant is a widespread task in several scientific domains. Namely, the goal is to generate samples from a probability measure
\begin{align*}
    \pi(x) \propto e^{-V_*(x)}\,,
\end{align*}
where $V_* : \Rd \to \R$ is some (possibly non-convex) potential function that is available for queries. In most cases, the target measure $\pi$ is only known up to the normalization constant. Applications of sampling from $\pi$ include Bayesian statistics, high-dimensional integration, differential privacy, statistical physics and uncertainty quantification; see  \cite{gelman1995bayesian,
robert1999monte,
mackay2003information,
johannes2010mcmc,
von2011bayesian,
kobyzev2020normalizing,sinhobook} for thorough treatments.

Recent interest in the task of sampling stems from the following paradigm: sampling is nothing but optimization over the space of probability measures \citep{wibisono2018sampling}. This interpretation is due to the connection between the celebrated work of Jordan, Kinderleher, and Otto \citep{jordan1998variational} and the Langevin diffusion dynamics given by
\begin{align}\label{eq: LD}
    \dd X_t = -\nabla V_*(X_t)\dd t + \sqrt{2} \dd B_t\,,
\end{align}
where $\dd B_t$ is Brownian motion.\footnote{This equation is to be understood from the perspective of It{\^o} calculus.} Indeed, the work of \cite{jordan1998variational} demonstrates that the path in the space of proabability measures given by the law of Eq. (\ref{eq: LD}) is the same as the Wasserstein gradient flow (i.e. steepest descent curve in the Wasserstein metric) of the Kullback-Leibler (KL) divergence 
\begin{align*}
\kl{\rho}{\pi} = \int \log \frac{\rho}{\pi} \dd \rho\,.
\end{align*}
 We write $(\rho_t^{\text{W}})_{t \geq 0} \sse \cP(\Rd)$ for the law of the path given by Eq. (\ref{eq: LD}) (see \cref{subsec:wasserstein} for a precise definition). 

A central problem in this area has been to bound the convergence rate of $\rho_t^{\text{W}}$ to $\pi$ in certain similarity metrics (e.g. the KL divergence itself, or the Wasserstein distance) under different conditions on $\pi$. These bounds translate to convergence rates for the Langevin Monte Carlo (LMC) sampling algorithm \citep{dalalyan2012sparse,vempala2019rapid,durmus2021analysis,chewi2022analysis}, upon accounting for discretization errors.

The classical result is as follows: assuming that $\pi$ satisfies a Log-Sobolev inequality (LSI) with constant $C_{\texttt{LSI}} > 0$, we obtain the following convergence rate \citep{stam1959some,gross1975logarithmic,Markowich99onthe} 
\begin{align}\label{eq: w_conv}
    \kl{\rho_t^{\text{W}}}{\pi} \leq \kl{\rho_0^{\text{W}}}{\pi}e^{-\frac{2t}{C_{\texttt{LSI}}}}\,,
\end{align}
which holds for all $t \geq 0$. Recall that $\pi$ satisfies a LSI if for all smooth test functions $g$,
\begin{align}\label{eq: LSI}
    \text{ent}_\pi(f^2) \leq 2 C_{\texttt{LSI}} \E_\pi\|\nabla f\|^2\,,
\end{align}
where $\text{ent}_\pi(g) \defeq \E_\pi(g\log g) - \E_\pi g \log \E_\pi g.$ For example, when $V_*$ $\alpha$-strongly convex, an LSI with $C_{\texttt{LSI}}=1/\alpha$ holds. 
LSI hold more generally, but sometimes with very large constants $C_{\texttt{LSI}}$. Indeed, for multimodal distributions such as mixtures of Gaussians, $C_{\texttt{LSI}}$ scales exponentially in the height of the potential barrier between modes \citep{Holley1987,arnold2000onconvex}.
This impacts convergence at the discrete-time level, and thus hinders our ability to generate samples using LMC.

Another geometry that gives rise to gradient flows over probability measures is the \textit{Fisher-Rao} (FR) geometry; see \cref{sec: FRGF} for definitions. Similar to the case of Wasserstein gradient flows, we let $(\rho_t^{\text{FR}})_{t \geq 0}$ be the FR gradient flow of the KL divergence. 
Recent work by Lu and collaborators has shown that the convergence $\rho_t^{\text{FR}} \to \pi$ occurs at a rate that is \textit{independent} of the potential function $V_*$. This is in stark contrast to the case of Wasserstein gradient flows, where the rate of convergence is intimately related to the structure of $V_*$ through the LSI constant. In their first work,  \cite{lu2019accelerating} show that for any $\delta \in (0,\tfrac14]$ there exists a $t_* \gtrsim \log(\delta^3)$ such that for all $t \geq t_*$,
\begin{align}\label{eq: fr_conv}
    \kl{\rho_t^{\text{FR}}}{\pi} \leq \kl{\rho_0^{\text{FR}}}{\pi}e^{-(2-3\delta)(t-t_*)}\,,
\end{align}
where they require a warm-start condition $\kl{\rho_0^{\text{FR}}}{\pi} \leq 1$, and assumption \textbf{(B)} (see \cref{sec: main}).
In \cite{lu2022birth}, the authors show that the KL divergence is always contracting under $(\rho_t^{\text{FR}})_{t\geq0}$ even in the absence of a warm-start, though with a worse rate. 
Combined, these two results provide the first continuous-time convergence rates of the gradient flow of the KL divergence under the FR geometry to $\pi$.

Merging both these geometries gives rise to the well-defined \textit{Wasserstein-Fisher-Rao} (WFR) geometry. The WFR geometry has recently been used to analyse the convergence dynamics of parameters of neural networks \cite{chizat2022sparse}, mean-field games \cite{rotskoff2019global}, and has shown to be useful in statistical tasks such as Gaussian variational inference \cite{lambert2022variational}, and identifying parameters of a Gaussian mixture model \cite{yan2023learning}. In the context of sampling, particle-based methods that follow dynamics governed by WFR gradient flow of the KL, written $(\rho_t^{\text{WFR}})_{t \geq 0}$, are known to escape the clutches of slow-convergence that plague the Wasserstein geometry. A simple observation \cite[Remark 2.4]{lu2022birth} gives the following continuous-time convergence rate for $t \geq t_*$:
\begin{align}\label{eq: wfr_conv}
    \kl{\rho_t^{\text{WFR}}}{\pi} \leq \min\{ \kl{\rho_t^{\text{W}}}{\pi}, \kl{\rho_t^{\text{FR}}}{\pi} \} \leq \kl{\rho_0^{\text{WFR}}}{\pi}\min\left\{ e^{-C_{\texttt{LSI}}t},e^{-(2-3\delta)(t-t_*)}\right\}\,,
\end{align}
where $\delta$ and $t_*$ are as in the FR convergence rate (\ref{eq: fr_conv}). Loosely speaking, this ``decoupled rate'' is a consequence of the Wasserstein and FR geometries being orthogonal to one another; this is made precise in \cite{gallouet2017jko}.

As elegant as this last connection may seem, the convergence rate in Eq. (\ref{eq: fr_conv}), and consequently Eq. (\ref{eq: wfr_conv}), should appear somewhat unsatisfactory to the reader. It raises the natural question of whether or not the factor of $\delta$ appearing in the rate is avoidable, and whether the upper bound in Eq. (\ref{eq: fr_conv}) is tight.

\subsection{Main contributions}
We close this gap for the KL divergence and any $q$-R{\'e}nyi divergence. Using a different proof technique than existing work, we prove the following asymptotic rate of convergence for the flow $(\rho_t^{\text{FR}})_{t \geq 0}$, namely for $t$ sufficiently large,
\begin{align}
\kl{\rho_t^{\text{FR}}}{\pi} = \tfrac12 \text{Var}_\pi\left( \log \frac{\rho_0^{\text{FR}}}{\pi} \right) e^{-2t} + O(e^{-3t})\,,
\end{align}
and a similar result holds for all $q$-R{\'e}nyi divergences. Our assumptions are weaker to that of prior work, and given that this is a tight asymptotic convergence rate, we conjecture that the assumptions are likely unavoidable in the large $t$ regime. 
Our proof technique provides an explicit expansion of $\kl{\rho_t^{\text{FR}}}{\pi}$ (and $q$-R{\'e}nyi) in terms of $e^{-t}$.
We supplement our finding with simulations for all three geometries, indicating that our convergence rate is in fact tight for Fisher-Rao gradient flows, and sheds light on possible conjectures for the convergence rate of WFR gradient flows.

\subsubsection*{Notation}
For a probability measure $\rho \in \cP(\Rd)$ and a function $f : \Rd \to \R$, we sometimes use the shorthand $\langle f \rangle_\rho \defeq \int f \dd \rho$. We let $\log(\cdot)$ denote the natural logarithm, and we use the standard shorthand notation $f = O(g)$, meaning there exists a constant $C > 0$ such that $f \leq Cg$.

\section{Background}\label{sec: background}
\subsection{Definitions}\label{sec: defs}
The study of gradient flows has a rich history in both pure and applied mathematics. 
The development of the relevant calculus to understand gradient flows is not the purpose of this note, and we instead provide a barebones introduction. 
However, we strongly recommend the interested reader consult standard textbooks on the topic, namely \cite{ambrosio2005gradient}, and the first chapter of \cite{sinhobook}.

Let $\cP(\Rd)$ be the space of probability measures over $\Rd$. A functional $\cF : \cP(\Rd) \to \R$ is defined on the space of probability measures, with $\rho \mapsto \cF(\rho) \in \R$. We call ${\delta \cF}(\rho)$ the \textit{first variation of $\cF$ at $\rho$} if for a signed measure $\eta$ such that $\int \dd \eta = 0$, it holds that 
\begin{align}
    \lim_{\varepsilon \to 0} \frac{\cF(\rho + \varepsilon \eta) - \cF(\rho)}{\varepsilon} = \int {\delta \cF}(\rho) \dd \eta\,.
\end{align}

The Kullback-Leibler (KL) divergence of a measure $\rho$ with respect to some fixed target measure $\pi$ is defined as $\kl{\rho}{\pi} = \int \log \frac{\rho}{\pi}\dd \rho$ for $\rho$ absolutely continuous with respect to $\pi$. For $\pi \propto e^{-V_*}$, the first variation of the KL divergence is given by
\begin{align}\label{eq: first_var_kl}
   {\delta \kl{\cdot}{\pi}}(\rho)(x) = \log \frac{\rho(x)}{\pi(x)} = \log \rho(x) + V_*(x) + \log Z_1\,,
\end{align}
where $Z_1$ is the normalizing constant for $\pi$.

A more general notion of dissimilarity between probability measures is the $q$-R{\'e}nyi divergence: for $q \in [1,\infty]$, we define $\cR_q(\rho\|\pi)$ to be the $q$-R{\'e}nyi divergence with respect to $\pi$, given by
\begin{align}
\cR_q(\rho\|\pi) \defeq \frac{1}{q - 1} \log \int \left(\frac{\rho}{\pi}\right)^{q} \dd \pi\,,
\end{align} 
for measures $\rho$ that are absolutely continuous with respect to $\pi$.
$\cR_q$ recovers the KL divergence in the limit $q\to 1$, and when $q=2$, $\cR_2(\rho\|\pi) = \log(\chis{\rho}{\pi}+1)$, where $\chi^2$ is the chi-squared divergence, written explicitly as 
\begin{align*}
\chis{\rho}{\pi} = \text{Var}_\pi\left(\frac{\rho}{\pi}\right) = \int \left(\frac{\rho}{\pi}\right)^2 \dd \pi - 1\,.
\end{align*}

\subsection{Gradient flows of the Kullback-Leibler divergence}\label{sec: GF_KL}
\subsubsection{Wasserstein gradient flow} \label{subsec:wasserstein}
In its \textit{dynamic formulation}, the 2-Wasserstein distance between two probability measures $\rho_0,\rho_1$ with bounded second moments can be written as \citep{villani2008optimal,benamou2000computational}
\begin{align}
    \mathrm{W}_2^2(\rho_0,\rho_1) \defeq \inf_{(\rho_t,v_t)} \int_0^1 \int \|v_t(x)\|^2 \rho_t(x) \dd x \dd t \quad \text{s.t.} \quad \partial_t  \rho_t + \nabla \cdot (\rho_t v_t) = 0\,,
\end{align}
where $(\rho_t)_{t \in [0,1]}$ is {a curve} of probability densities over $\Rd$, and $(v_t)_{t \in [0,1]}$ is {a curve} of $L^2(\Rd)^d$ vector fields. The constraint is known as the continuity equation, with endpoints $\rho_0$ and $\rho_1$.
For a functional $\cF: \cP(\Rd) \to \R$, the \textit{Wasserstein gradient flow} is the curve of measures $(\rho_t^\text{W})_{t \geq 0}$ that satisfies the continuity equation with the vector field replaced by the steepest descent under the Wasserstein geometry, 
\begin{align*}
    v_t = -\nabla_{W_2} \cF(\rho_t^{\text{W}}) \defeq \nabla {\delta \cF}(\rho_t^{\text{W}})\,,
\end{align*}
where the last equation is simply the (standard) spatial gradient of the first variation of $\cF$. Plugging in the expression for the first variation of the KL divergence (\ref{eq: first_var_kl}), we see that the law of the Langevin diffusion is given by $\rho_t^{\text{W}}$ which satisfies
\begin{align} \label{eq:W_flow}
    \partial_t \rho_t^{\text{W}} = 
    \nabla \cdot \left(\rho_t^{\text{W}} (\nabla \log \rho_t^{\text{W}} + \nabla V_*) \right)\,.
\end{align}
This equation may be rewritten as $\partial_t \rho_t^{\text{W}} = \nabla \cdot (\nabla V_* \rho_t) + \Delta \rho_t$, which one readily identifies as the Fokker-Planck equation for the potential $V_*$.
The equation describes the evolution of the distribution of a particle that moves according to the stochastic differential equation \ref{eq: LD}.
At the particle level, the key aspect of Wasserstein gradient flows is that they model particle \textit{transport}, and that makes them useful for high-dimensional applications such as LMC.
In what follows, we will sometimes abbreviate Wasserstein gradient flow to W-GF.

\subsubsection{Fisher-Rao gradient flow}\label{sec: FRGF}
The Fisher-Rao distance, or Hellinger-Kakutani distance, between probability measures has a long history in statistics and information theory \citep{hellinger1909neue,kakutani1948onequivalence}. It can be defined as \citep{bogachev2007measure,gallouet2017jko}
\begin{align*}
    \mathrm{FR}^2(\rho_0,\rho_1) \defeq \inf_{(\rho_t,r_t)} \int_0^1 \int r_t(x)^2 \rho_t(x) \dd x \dd t \quad \text{s.t.} \quad \partial_t  \rho_t = r_t \rho_t\,,
\end{align*}
where $(\rho_t)_{t\in[0,1]}$ is again a curve of probability measures, and $(r_t)_{t\in[0,1]}$ is {a curve} of $L^2(\Rd)$ functions. Together, they satisfy the prescribed equation, with endpoints equal to $\rho_0$ and $\rho_1$.
The Fisher-Rao gradient flow of the KL divergence, also known as \textit{Birth-Death dynamics}, is the curve of measures $(\rho_t^{\text{FR}})_{t \geq 0}$ that satisfies \citep{gallouet2017jko,lu2019accelerating}
\begin{align*}
    \partial_t \rho^{\text{FR}}_t = -\rho^{\text{FR}}_t \alpha_t\,, \quad \alpha_t \defeq  \log\frac{\rho^{\text{FR}}_t}{\pi} - \kl{\rho^{\text{FR}}_t}{\pi}\,.
\end{align*}
The first term adjusts mass (i.e. gives birth to or kills mass) according to the log-ratio of $\rho_t^{\text{FR}}$ and the target measure $\pi$. The last term preserves the total mass, so that $\rho_t^{\text{FR}} \in \cP(\Rd)$ for all time.

Expanding this equation, we have
\begin{align} \label{eq:FR_KL}
    \partial_t \rho_t^{\text{FR}}(x) = -\big( \log(\rho_t^{\text{FR}}(x)) + V_*(x) - \big\langle \log(\rho_t^{\text{FR}}) + V_* \big\rangle_{\rho_t^{\text{FR}}} \big) \rho_t^{\text{FR}}(x).
\end{align}
We henceforth omit the superscript $\text{FR}$ for the Fisher-Rao gradient flow of the KL divergence unless the notation becomes ambiguous. 
For short-hand, we make use of the abbreviation FR-GF for Fisher-Rao gradient flows. 

The FR-GF may be simulated using a system of weighted particles (see \cref{sec:details_sim}). Unlike for the  W-GF, in this case the positions of the particles are fixed; only the weights change over time. Hence, to simulate the FR-GF one is forced to grid the underlying space $\R^d$. This is feasible only for small dimensions $d$. Consequently, FR-GFs cannot be simulated in high dimensions, which makes them impractical for sampling applications. 

\subsubsection{Wasserstein-Fisher-Rao geometry gradient flow} \label{subsec:WFR}
The Wasserstein-Fisher-Rao distance between probability measures arises as a combination of the Wasserstein and the Fisher-Rao distances \citep{chizat2018unbalanced,chizat2015aninterpolating,kondratyev2016anew,liero2016optimal,liero2018optimal}. It is defined as
\begin{align*}
    \mathrm{WFR}^2(\rho_1,\rho_1) \defeq \inf_{(\rho_t,v_t,r_t)} \int_0^1 \int (  \|v_t(x)\|^2 + r_t(x)^2) \rho_t(x) \dd x \dd t \quad \text{s.t.} \quad \partial_t  \rho_t + \nabla \cdot (\rho_t v_t) = r_t \rho_t \,,
\end{align*}
where, for each $t \in [0,1]$, the triple $(\rho_t,v_t,r_t)$ lives in  $\cP(\Rd) \times L^2(\Rd)^d \times L^2(\Rd),$ and they simultaneously satisfy the constraint equation, which has endpoints $\rho_0$ and $\rho_1$, as well. 
Similarly, the Wasserstein-Fisher-Rao gradient flow of the KL divergence is the solution of PDE that incorporates the terms in the Wasserstein and Fisher-Rao gradient flows (\cref{eq:W_flow} and \cref{eq:FR_KL}):
\begin{align} \label{eq:WFR_flow}
    \partial_t \rho_t^{\text{WFR}} = \nabla \cdot \left(\rho_t^{\text{WFR}} (\nabla \log \rho_t^{\text{WFR}} + \nabla V_*) \right) -\big( \log(\rho_t^{\text{WFR}}) + V_* - \big\langle \log(\rho_t^{\text{WFR}}) + V_* \big\rangle_{\rho_t^{\text{WFR}}} \big) \rho_t^{\text{WFR}}
\end{align}
Similar to the other geometries, we write WFR-GF as shorthand for Wasserstein-Fisher-Rao gradient flow At the particle level, WFR-GFs are able to capture both \emph{transport} and \emph{weight updates}, which is why they enjoy a convergence rate that at least matches the better rate between W- and FR-GFs (recall \cref{eq: wfr_conv}), and is clearly superior in practice {in some instances.} Hence, any improvement in the convergence analysis of either W- or FR-GFs translates to improving our understanding of WFR-GFs.

\subsection{Simulated annealing dynamics}\label{sec: simulated_annealing}
Simulated annealing is a technique seen in several works when attempting to either optimize a function or sample from a multimodal probability distribution, and has a long history \citep{pincus1970amontecarlo,kirkpatrick1983optimization}, and plays a crucial role in our analysis. In what follows, we introduce the annealing path with linear scaling, and conclude with a proposition.

Consider the time-dependent measure $(\mu_\tau)_{\tau \in [0,1]}$ corresponding to the annealing path, with \textit{linear scaling}, initialized at the measure $\mu_0 = \rho_0 \propto e^{-V_0}$. By definition, $\mu_\tau$ admits the density
\begin{align} \label{eq:annealed_dyn}
    \mu_{\tau}(x) = \frac{e^{-\tau (V_*(x)-V_0(x)) - V_0(x)}}{Z_{\tau}}, \quad Z_{\tau} = \int_{\R^d} e^{-\tau (V_*(x)-V_0(x)) - V_0(x)} \, dx,
\end{align}
for $\tau \in [0,1]$. Note that indeed, $\mu_1 = \pi$. To this end, it will be convenient to rewrite \cref{eq:annealed_dyn} in terms of the log-density of $\mu_\tau$. Remark that

\begin{align}
    \log(\mu_\tau(x)) = -\tau (V_*(x) - V_0(x)) - V_0(x) - \log Z_\tau\,.
\end{align}
One can check that the pointwise derivative of the density $\mu_\tau$ (with respect to $\tau$) is
\begin{align}\label{eq:rho_star_general}
    \partial_\tau \mu_\tau (x) = - (V_*(x) - V_0(x) -  \langle V_* - V_0 \rangle_{\mu_\tau})\mu_\tau(x)\,.
\end{align}

From this, we obtain that
\begin{align}
\begin{split}
    &\log(\mu_\tau(x)) + E(x) - \big\langle \log(\mu_\tau) + V_* \big\rangle_{\mu_\tau} \\ &= -\tau (V_*(x) - V_0(x)) - V_0(x) - \langle - \tau (V_* - V_0) - V_0 + V_* \rangle_{\mu_{\tau}}  + V_*(x) - \langle V_* \rangle_{\mu_{\tau}}
    \\ &= -\tau (V_*(x) - V_0(x)) - V_0(x) + V_*(x) - \langle -\tau (V_* - V_0) - V_0 + V_* \rangle_{\mu_{\tau}} 
    \\ &= (1-\tau) \big( V_*(x) - V_0(x) \big) - (1-\tau) \langle V_* - V_0 \rangle_{\mu_\tau}
    \\ &= (1-\tau) \big( V_*(x) - V_0(x) - \langle V_* - V_0 \rangle_{\mu_\tau} \big)\,.
\end{split}
\end{align}
Note that in the first equality we used that the log-partition is a constant and gets cancelled. Consequently, \cref{eq:rho_star_general} can be rewritten, for $\tau \in (0,1)$, as
\begin{align} \label{eq:rho_star_general2}
    \partial_\tau \mu_\tau(x) = - \frac{1}{1-\tau} \big(\log(\mu_\tau(x)) + V_*(x) - \big\langle \log(\mu_\tau) + V_* \big\rangle_{\mu_\tau} \big)\mu_\tau(x).
\end{align}

A first observation is that that the linear schedule $\tau$ in the exponent of \cref{eq:annealed_dyn} results in dynamics that resemble the Fisher-Rao gradient flow of the KL divergence, up to a reparameterization that can be made explicit. Indeed, if one compares \cref{eq:rho_star_general2} with \cref{eq:FR_KL}, the only difference is the factor $\frac{1}{1-\tau}$ in the right-hand side of \cref{eq:rho_star_general2}. Since the solution of the Fisher-Rao gradient flow of the KL divergence is unique (see \cref{prop:uniqueness} in \cref{sec: remaining_proofs}), an appropriate time reparameterization of the annealed dynamics (\ref{eq:annealed_dyn}) will yield the solution  (\ref{eq:FR_KL}). We summarize this observation in the following proposition, which we were unable to find a citation for in the literature.
\begin{proposition} \label{prop:FR_explicit}
    Let $(\mu_{\tau})_{\tau \in [0,1]}$ be as defined in \cref{eq:annealed_dyn}. The Fisher-Rao gradient flow $(\rho_t)_{t \geq 0}$ of $\kl{\rho}{\pi}$ (i.e. solving \cref{eq:FR_KL}) is given by $\rho_t = \mu_{1-e^{-t}}$.
\end{proposition}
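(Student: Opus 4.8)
The plan is to verify directly that the reparameterized curve $t \mapsto \mu_{1-e^{-t}}$ satisfies the defining PDE \cref{eq:FR_KL} together with the correct initial condition, and then to invoke uniqueness of the Fisher-Rao gradient flow (\cref{prop:uniqueness}) to conclude that it must coincide with $(\rho_t)_{t\geq 0}$. Concretely, set $\tau(t) \defeq 1 - e^{-t}$, so that $\tau : [0,\infty) \to [0,1)$ is a smooth increasing bijection with $\tau(0) = 0$ and, crucially, $\tau'(t) = e^{-t} = 1 - \tau(t)$. Define $\tilde\rho_t \defeq \mu_{\tau(t)}$. Since each $\mu_\tau$ is a normalized density by construction (see \cref{eq:annealed_dyn}), $\tilde\rho_t \in \cP(\Rd)$ for every $t$, and $\tilde\rho_0 = \mu_0 = \rho_0$, which matches the initialization of the Fisher-Rao gradient flow. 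Here one should record the standing integrability hypotheses on $e^{-V_0}$ and $e^{-V_*}$ that make $Z_\tau$, $\langle V_* - V_0\rangle_{\mu_\tau}$, and their $\tau$-derivatives well defined; these are exactly what is used implicitly in deriving \cref{eq:rho_star_general}--\cref{eq:rho_star_general2}.

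Next I would differentiate in $t$ via the chain rule, $\partial_t \tilde\rho_t(x) = \tau'(t)\,\big(\partial_\tau \mu_\tau\big)(x)\big|_{\tau=\tau(t)}$, and substitute \cref{eq:rho_star_general2}, which gives, for $t > 0$ (so that $\tau(t)\in(0,1)$),
\begin{align*}
    \partial_t \tilde\rho_t(x) = (1-\tau(t)) \cdot \left(-\frac{1}{1-\tau(t)}\right)\big(\log(\mu_{\tau(t)}(x)) + V_*(x) - \langle \log(\mu_{\tau(t)}) + V_*\rangle_{\mu_{\tau(t)}}\big)\mu_{\tau(t)}(x)\,.
\end{align*}
The two factors $1-\tau(t)$ cancel exactly, leaving $\partial_t \tilde\rho_t(x) = -\big(\log \tilde\rho_t(x) + V_*(x) - \langle \log\tilde\rho_t + V_*\rangle_{\tilde\rho_t}\big)\tilde\rho_t(x)$, which is precisely \cref{eq:FR_KL}. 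The endpoint $t = 0$ (i.e. $\tau = 0$), where \cref{eq:rho_star_general2} was not asserted, is handled separately using \cref{eq:rho_star_general} directly: at $\tau = 0$ one has $\log\mu_0 = -V_0 - \log Z_0$, hence $\log\mu_0 + V_* - \langle\log\mu_0+V_*\rangle_{\mu_0} = (V_*-V_0) - \langle V_*-V_0\rangle_{\mu_0}$, and since $\tau'(0) = 1$ the chain rule again reproduces \cref{eq:FR_KL} at $t=0$.

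Having shown that $(\tilde\rho_t)_{t\geq 0}$ solves the same initial value problem as the Fisher-Rao gradient flow of $\kl{\cdot}{\pi}$, uniqueness of solutions (\cref{prop:uniqueness}) forces $\rho_t = \tilde\rho_t = \mu_{1-e^{-t}}$ for all $t \geq 0$. I do not anticipate a serious obstacle: the algebraic heart of the argument is just the identity $\tau'(t) = 1 - \tau(t)$, which cancels the singular $\frac{1}{1-\tau}$ prefactor of the annealed dynamics. The only points requiring genuine care are (i) making the regularity/integrability assumptions on $V_0, V_*$ explicit so that $\tau \mapsto \mu_\tau$ is differentiable and the manipulations behind \cref{eq:rho_star_general2} are rigorous (in particular differentiation under the integral sign for $Z_\tau$ and for the $\mu_\tau$-expectations), and (ii) citing a uniqueness statement for \cref{eq:FR_KL} valid at the level of generality considered, namely \cref{prop:uniqueness}. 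It is also worth noting in passing that the reparametrization covers only $\tau \in [0,1)$, never reaching $\tau = 1$ where $\mu_1 = \pi$; this is consistent with the flow approaching $\pi$ only as $t \to \infty$ and poses no issue for the identity at finite $t$.
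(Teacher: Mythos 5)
Your proof is correct and follows essentially the same route as the paper's: both hinge on comparing \cref{eq:FR_KL} with \cref{eq:rho_star_general2} via the chain rule and the identity $\tau'(t) = 1-\tau(t)$ (equivalently $\frac{dt}{d\tau} = \frac{1}{1-\tau}$), and both appeal to \cref{prop:uniqueness}. The only difference is one of direction---the paper derives the time change by solving the ODE $\frac{dt}{d\tau} = \frac{1}{1-\tau}$ with $\tau(0)=0$, whereas you verify the stated reparameterization directly---together with some added care at $t=0$ and about integrability, which is welcome but not a genuinely different argument.
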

\begin{proof}
If we write $t$ as a function of $\tau$, we have that
\begin{align} \label{eq:t_of_tau}
    \partial_{\tau} \rho_{t(\tau)} = \partial_{t} \rho_{t(\tau)} \frac{dt}{d\tau}(\tau) = - \frac{dt}{d\tau}(\tau) \big( \log(\rho_{t(\tau)}(x)) + E(x) - \big\langle \log(\rho_{t(\tau)}) + E \big\rangle_{\rho_{t(\tau)}} \big)\rho_{t(\tau)}(x).
\end{align}
Identifying $\rho_{t(\tau)}$ with $\rho_{\tau}$, and establishing a direct comparison with \cref{eq:rho_star_general2}, we obtain that for \cref{eq:t_of_tau} to hold, $t(\tau)$ must fulfill $\frac{dt}{d\tau}(\tau) = \frac{1}{1-\tau}$.
With the initial condition that $\tau(0)=0$, this differential equation has the following unique solution:
\begin{align}
    t(\tau) = \int_{0}^{\tau} \frac{1}{1-s} \, ds = -\log (1-\tau)\,.
\end{align}
That is, we have that $t(\tau) = -\log (1-\tau)$, or equivalently, $\tau(t) = 1-e^{-t}$.    
\end{proof}

\subsection{Cumulants and their power series}\label{sec: cumulants}
Our core argument hinges on observing a relation between the above gradient flows and their connection to cumulants of a random variable. Recall that for a random variable $Y$, its \textit{cumulant-generating function} to be $K_Y(z) = \log \mathbb{E}[e^{Yz}]$. The $n^{\text{th}}$ cumulant $\kappa_n$ of the random variable $Y$ is defined as the $n^{\text{th}}$ derivative of $K_Y$ evaluated at $z=0$, that is, $\kappa_n = K^{(n)}_Y(0)$. Similar to moment-generating functions, if $K_Y(z)$ is finite in some neighborhood of $z \in (-\epsilon_0,\epsilon_0)$, then it holds that $K_Y$ is smooth (in fact, holomorphic) (see e.g. \cite[Section II.12.8]{ShiryaevProbability}. Moreover, $K_Y(z)$ admits the following infinite series expansion
\begin{align*}
    K_Y(z) = \sum_{n \geq 1} \frac{\kappa_n}{n!} z^n\,. 
\end{align*}
In particular, one can easily check that $\kappa_1 = \E[Y]$ and $\kappa_2 = \text{Var}(Y)$.

\section{Main result}\label{sec: main}
The goal of this section is to prove our main result, which is an explicit expansion of the KL divergence in terms of log-cumulants of the random variable $\log\frac{\rho_0(X)}{\pi(X)}$ where $X \sim \pi$. We make the following assumptions throughout, and we will make their uses explicit when necessary.
\begin{description}
\item \textbf{(A1)} $V_* \in L_1(\pi)$,
\item \textbf{(A2)} There exists $\alpha \in \R_+$, such that $\inf_x \frac{\rho_0(x)}{\pi(x)^{1+\alpha}} > 0$.
\end{description}

Assumption \textbf{\textbf{(A1)}} ensures that $\pi$ has finite differential entropy, and is a relatively weak condition.
\textbf{(A2)} asks that at least some mass is initially placed along the support of $\pi$.
\textbf{(A2)} is, however, a much weaker assumption that what is currently used in the literature. To be precise, \cite{lu2019accelerating,lu2022birth} assume a particular case of \textbf{(A2)}, namely
\begin{description}
    \item \textbf{(B)} There exists $M > 0$ such that $\inf_x \frac{\rho_0(x)}{\pi(x)} \geq e^{-M}$.
\end{description}
This is essentially the same as \textbf{(A2)}, though $\alpha$ is constrained to be 0, and a precise lower bound on the infimum is needed. Note that \textbf{(A2)} is weaker the larger $\alpha$ is, as $\pi(x)^{1+\alpha}$ decreases faster. As a comparison, if $\rho_0$ and $\pi$ are Gaussians, \textbf{(A2)} covers the setting where both have arbitrary means and covariances, while constraining $\alpha = 0$ only covers the cases in which the covariance matrix of $\rho_0$ is strictly larger than the one of $\pi$ in the positive definite order.

The following theorem is our main contribution. While here we have stated an asymptotic expression, in fact a more general expression is available as an infinite power series under the same assumptions, and appears explicitly in the proof.

\begin{theorem}\label{thm: main}
Suppose \textbf{(A1)} and \textbf{(A2)} hold. Then for $t$ large enough and any $q \in (1,\infty)$,
\begin{align} \label{eq:kl_renyi_approx}
&\kl{\rho_t}{\pi} = \frac{\kappa_2}{2}e^{-2t} + O(e^{-3t})\,, \quad \text{ and } \quad \cR_q(\rho_t\| \pi) = \frac{q \kappa_2}{2}e^{-2t}   + O_q(e^{-3t})\,,
\end{align}
where $\kappa_2 = \text{Var}_\pi\left( \log \frac{\rho_0}{\pi}\right)$.
\end{theorem}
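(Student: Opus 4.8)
The plan is to leverage Proposition~\ref{prop:FR_explicit} to reduce everything to an explicit computation on the annealing path. Since $\rho_t = \mu_{1-e^{-t}}$, it suffices to expand $\kl{\mu_\tau}{\pi}$ and $\cR_q(\mu_\tau\|\pi)$ as $\tau \to 1$, and then substitute $\tau = 1-e^{-t}$ so that $1-\tau = e^{-t}$. Writing $W \defeq V_* - V_0 = \log\frac{\pi}{\rho_0} - \log Z_1 + \log Z_0$ (a constant shift that will cancel), the density is $\mu_\tau \propto e^{-V_0 - \tau W}$, so the density ratio is $\frac{\mu_\tau}{\pi} \propto e^{(1-\tau) W}$ up to normalization. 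Concretely, $\frac{\mu_\tau(x)}{\pi(x)} = \frac{Z_1}{Z_\tau} e^{(1-\tau)(V_*(x)-V_0(x))} \cdot e^{-(V_*(x) - V_0(x))}\cdot e^{V_*(x)-V_0(x)}$ — more cleanly, observe $\mu_\tau/\pi = e^{(1-\tau)(\widetilde W - \langle \cdots\rangle)}$ type expressions; the point is that $\log\frac{\mu_\tau}{\pi} = (1-\tau)\,U + c(\tau)$ where $U \defeq \log\frac{\rho_0}{\pi} - \E_\pi[\log\frac{\rho_0}{\pi}]$ is a \emph{centered} version of the log-ratio and $c(\tau)$ is the $x$-independent normalization. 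Here is where the cumulant machinery of \cref{sec: cumulants} enters: with $Y \defeq \log\frac{\rho_0(X)}{\pi(X)}$ for $X \sim \pi$, one checks directly that $Z_\tau/Z_1$ is (up to constants) the moment generating function $\E_\pi[e^{(1-\tau)Y}]$ evaluated at the parameter $s \defeq 1-\tau = e^{-t}$, so $c(\tau) = -K_Y(s) + \text{const}$, and the constant is pinned down by $\int \mu_\tau\,d\pi$-type normalization or by $\kl{\mu_1}{\pi}=0$.

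Next I would plug into the definitions. For the KL divergence, $\kl{\mu_\tau}{\pi} = \int \log\frac{\mu_\tau}{\pi}\,d\mu_\tau = \E_{\mu_\tau}[sY] - K_Y(s) + (\text{const in }x)$; using the exponential-tilt identity $\E_{\mu_\tau}[f(Y)] = \E_\pi[f(Y)e^{sY}]/\E_\pi[e^{sY}]$, one recognizes $\E_{\mu_\tau}[Y] = K_Y'(s)$ (the standard fact that the tilted mean is the derivative of the CGF), so $\kl{\mu_\tau}{\pi} = s K_Y'(s) - K_Y(s)$, which is a clean closed form — it is the Bregman divergence / Legendre-type remainder of $K_Y$. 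Expanding $K_Y(s) = \sum_{n\ge 1}\frac{\kappa_n}{n!}s^n$ and $K_Y'(s) = \sum_{n\ge 1}\frac{\kappa_n}{(n-1)!}s^{n-1}$ gives $s K_Y'(s) - K_Y(s) = \sum_{n\ge 2}\frac{(n-1)\kappa_n}{n!}s^n = \frac{\kappa_2}{2}s^2 + O(s^3)$, and substituting $s = e^{-t}$ yields the first claim, with the full infinite power series being exactly $\sum_{n\geq 2}\frac{(n-1)\kappa_n}{n!}e^{-nt}$. For the $q$-Rényi divergence, $\cR_q(\mu_\tau\|\pi) = \frac{1}{q-1}\log\int(\mu_\tau/\pi)^q\,d\pi = \frac{1}{q-1}\log\E_\pi[e^{qsY}] + \frac{q}{q-1}(\text{const}) = \frac{1}{q-1}\big(K_Y(qs) - qK_Y(s)\big)$ after the normalization constants are accounted for; expanding, $K_Y(qs) - qK_Y(s) = \sum_{n\ge 2}\frac{\kappa_n}{n!}(q^n - q)s^n = \frac{\kappa_2}{2}(q^2-q)s^2 + O(s^3)$, so dividing by $q-1$ gives $\frac{q\kappa_2}{2}s^2 + O_q(s^3)$, which is the second claim (and the $q\to 1$ limit recovers the KL expansion, as it should).

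The main obstacle — and the only place the assumptions \textbf{(A1)}, \textbf{(A2)} are really needed — is \emph{justifying convergence of the power series and the finiteness/smoothness of $K_Y$ near the relevant range of $s$}. We need $K_Y(z) = \log\E_\pi[e^{zY}] = \log\E_\pi[(\rho_0/\pi)^z]$ to be finite for $z$ in a neighborhood of $[0,1]$ (we evaluate at $s = e^{-t}\in(0,1)$ and at $qs$, which for $t$ large is still small, so really we need finiteness on $[0, q e^{-t}]$ for $t$ past some threshold, i.e. a neighborhood of $0$ on the positive side, plus we should check the upper end). Finiteness of $\E_\pi[(\rho_0/\pi)^z]$ for $z$ slightly larger than $1$ is exactly what \textbf{(A2)} buys: $\inf_x \rho_0(x)/\pi(x)^{1+\alpha} > 0$ means $\rho_0/\pi \geq c\,\pi^\alpha$, but for an upper tail bound we actually want $\E_\pi[(\rho_0/\pi)^{1+\alpha'}] = \int \rho_0^{1+\alpha'}\pi^{-\alpha'} < \infty$; one shows \textbf{(A2)} implies $\rho_0 \le C\pi^{1/(1+\alpha)}$-type control is \emph{not} what's given — rather \textbf{(A2)} gives a \emph{lower} bound on the ratio, which controls the behavior as $z\to 1^-$ from the side that matters (ensuring $K_Y$ doesn't blow up near $z=1$ so that the remainder terms are genuinely $O(e^{-3t})$ uniformly). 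I would state a lemma: under \textbf{(A1)}+\textbf{(A2)}, $K_Y$ is finite and real-analytic on an open interval containing $[0,1]$, hence the series converges there and the tail $\sum_{n\ge 3}$ is $O(s^3)$ with an explicit constant; then track the $q$-dependence to get $O_q(e^{-3t})$. Once that analytic lemma is in place, everything else is the bookkeeping above, and I would present the infinite-series identities $\kl{\rho_t}{\pi} = \sum_{n\ge 2}\frac{(n-1)\kappa_n}{n!}e^{-nt}$ and $\cR_q(\rho_t\|\pi) = \frac{1}{q-1}\sum_{n\ge 2}\frac{\kappa_n}{n!}(q^n-q)e^{-nt}$ as the sharper statements promised in the theorem's preamble.
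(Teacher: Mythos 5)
Your route is essentially the paper's own proof: reparameterize via Proposition~\ref{prop:FR_explicit}, derive the closed forms $\kl{\mu_\tau}{\pi} = sK_Y'(s) - K_Y(s)$ and $\cR_q(\mu_\tau\|\pi) = \frac{1}{q-1}\bigl(K_Y(qs) - qK_Y(s)\bigr)$ with $s = 1-\tau = e^{-t}$, and expand the cumulant generating function. Your series coefficients agree with the paper's ($\tfrac{(n-1)\kappa_n}{n!} = \tfrac{\kappa_n}{n(n-2)!}$), and the $q$-R\'enyi bookkeeping is identical.

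The one place your plan would fail as written is the analyticity lemma. You propose to show that $K_Y$ is finite and real-analytic on an open interval containing $[0,1]$; this does not follow from \textbf{(A1)} and \textbf{(A2)} (finiteness of $\E_\pi[(\rho_0/\pi)^{z}]$ for $z>1$ is an \emph{upper}-tail condition on $\rho_0/\pi$ that neither assumption provides), and it is not needed. What is needed — and what the paper's Proposition~\ref{prop: k_y is finite} actually proves — is finiteness of $K_Y$ on a \emph{two-sided} neighborhood $(-\epsilon_0,\epsilon_0)$ of the origin, since only this licenses the power series at $s=0$ and the Cauchy--Hadamard bound that turns the tail into $O(e^{-3t})$. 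Your localization of where \textbf{(A2)} bites is also off: finiteness on the positive side $z\in[0,1)$ is free (convexity of the exponential gives $\E_\pi[e^{zY}] \le (1-z)Z_1 + zZ_0$, suitably normalized), whereas the assumptions are consumed entirely on the \emph{negative} side $z\in(-\epsilon_0,0]$, where $\E_\pi[e^{zY}] = \int (\pi/\rho_0)^{|z|}\,d\pi$ is controlled precisely by the lower bound $\rho_0 \ge c\,\pi^{1+\alpha}$ from \textbf{(A2)} together with \textbf{(A1)} to bound $\int \pi^{1-|z|\alpha}$. "A neighborhood of $0$ on the positive side," which you mention as the requirement, would not suffice for analyticity at $0$. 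With the lemma restated as two-sided finiteness near $0$ and proved along these lines, the rest of your argument goes through and coincides with the paper's.
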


\begin{remark}
The coefficient $\kappa_2$ is nothing more than the variance under $\pi$ of the first-variation of the KL divergence at $\rho_0$ (recall \cref{eq: first_var_kl}).
\end{remark}

\subsection{Proof}

Henceforth, we will always write
\begin{align}\label{eq: def_y}
Y \defeq \log\frac{\rho_0(X)}{\pi(X)} \text{ where } X \sim \pi\,.
\end{align}
 The first step in our proof is to represent these divergences as a function of the cumulants of $Y$, which is possible due to the aforementioned time-reparameterization of the FR flow. 

\begin{proposition}\label{prop: diff_rep}
    Let $\pi \propto e^{-V_*}$ and $\rho_0 \propto e^{-V_0}$ be probability measures on $\Rd$, and let $Y$ be as in \cref{eq: def_y}. Let $(\mu_\tau)_{\tau \in [0,1]}$ be follow the simulated annealing dynamics from \cref{eq:annealed_dyn}. It holds that
    \begin{align} \label{eq:KL_K_Y}
        &\kl{\mu_\tau}{\pi} = (1-\tau)K_Y'(1-\tau) - K_Y(1-\tau)\,, \\
        & \cR_q(\mu_\tau\|\pi)  = \frac{1}{q-1}K_Y(q(1-\tau)) - \frac{q}{q-1}K_Y(1-\tau)\,.
    \end{align}
\end{proposition}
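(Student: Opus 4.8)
The plan is to reduce both identities to a single formula for the density ratio $\mu_\tau/\pi$, and then to read off the right-hand sides as values and derivatives of the cumulant generating function $K_Y$, where I view $Y(x) \defeq \log(\rho_0(x)/\pi(x))$ as a deterministic function, so that the random variable in \cref{eq: def_y} is $Y = Y(X)$ with $X \sim \pi$. The first step is to rewrite the annealing path in ``geometric'' form: since $e^{-\tau(V_*-V_0)-V_0} = (e^{-V_*})^{\tau}(e^{-V_0})^{1-\tau}$ with $e^{-V_*}\propto\pi$ and $e^{-V_0}\propto\rho_0$, the path of \cref{eq:annealed_dyn} is just $\mu_\tau \propto \pi^{\tau}\rho_0^{1-\tau}$ (the factor $Z_1^{\tau}Z_0^{1-\tau}$ being absorbed into the normalization). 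Dividing by $\pi$ and normalizing,
\[
\frac{\mu_\tau(x)}{\pi(x)} = \frac{(\rho_0(x)/\pi(x))^{1-\tau}}{\E_\pi[(\rho_0/\pi)^{1-\tau}]} = \exp\!\big((1-\tau)Y(x) - K_Y(1-\tau)\big),
\]
since $\E_\pi[(\rho_0/\pi)^{1-\tau}] = \E_\pi[e^{(1-\tau)Y}] = e^{K_Y(1-\tau)}$. Equivalently, $\log(\mu_\tau(x)/\pi(x)) = (1-\tau)Y(x) - K_Y(1-\tau)$; write $s \defeq 1-\tau$.

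For the KL divergence I would change measure back to $\pi$, $\kl{\mu_\tau}{\pi} = \int \log(\mu_\tau/\pi)\,\dd\mu_\tau = \E_\pi\!\big[(\mu_\tau/\pi)\log(\mu_\tau/\pi)\big]$, and substitute the identity above to get $\kl{\mu_\tau}{\pi} = e^{-K_Y(s)}\big(s\,\E_\pi[Y e^{sY}] - K_Y(s)\,\E_\pi[e^{sY}]\big)$. Using the smoothness of $K_Y$ on the interior of its domain of finiteness (cf. \cref{sec: cumulants}) to differentiate under the expectation, $\E_\pi[e^{sY}] = e^{K_Y(s)}$ and $\E_\pi[Y e^{sY}] = \tfrac{d}{ds}e^{K_Y(s)} = K_Y'(s)e^{K_Y(s)}$; substituting and cancelling $e^{K_Y(s)}$ gives $\kl{\mu_\tau}{\pi} = sK_Y'(s) - K_Y(s)$, the first claim. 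For the Rényi divergence, raising the density-ratio identity to the power $q$ yields $\int (\mu_\tau/\pi)^q \,\dd\pi = e^{-qK_Y(s)}\,\E_\pi[e^{qsY}] = \exp\!\big(K_Y(qs) - qK_Y(s)\big)$, so that $\cR_q(\mu_\tau\|\pi) = \tfrac{1}{q-1}\log\!\int(\mu_\tau/\pi)^q\,\dd\pi = \tfrac{1}{q-1}K_Y(q(1-\tau)) - \tfrac{q}{q-1}K_Y(1-\tau)$.

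The algebra is routine; the point requiring care — and the one I would treat as the real content of the argument — is the domain of finiteness of $K_Y$, i.e. ensuring $K_Y$ is finite (hence real-analytic, with a genuine two-sided derivative) at the arguments $1-\tau$ and $q(1-\tau)$ that appear. Finiteness on $[0,1]$ is automatic: for $s\in[0,1]$ the map $u\mapsto u^{s}$ is concave, so $\E_\pi[(\rho_0/\pi)^{s}] \le (\E_\pi[\rho_0/\pi])^{s} = 1$, and positivity gives $K_Y(s)\in(-\infty,0]$; this already justifies the KL identity for $\tau\in(0,1)$. Extending to a neighborhood of $s=0$ — the regime $\tau\to1$ relevant to \cref{thm: main}, where one also wants the power series of $K_Y$ — and to arguments $q(1-\tau)>1$ is exactly where \textbf{(A2)}, which controls $\pi/\rho_0$ and hence the negative and large moments of $\rho_0/\pi$ under $\pi$, together with the closeness of $\tau$ to $1$, enters. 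Accordingly I would state the identity on the range of $\tau$ for which the relevant $K_Y$ values are finite, and defer to the proof of \cref{thm: main} the verification that this range contains a left-neighborhood of $\tau=1$.
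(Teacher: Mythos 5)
Your proof is correct and follows essentially the same route as the paper's: both identify the normalizing constant of $\mu_\tau$ with $e^{K_Y(1-\tau)}$ (up to a harmless affine shift in $Y$ that cancels in both formulas) and use the standard fact that differentiating the log-partition function yields the tilted mean, i.e.\ $\langle V_*-V_0\rangle_{\mu_\tau}=K_Y'(1-\tau)$. Your closing remarks on the domain of finiteness of $K_Y$ correspond to what the paper defers to \cref{prop: k_y is finite}.
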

\begin{proof}
We first identify the following relationship, which arises from a simple manipulation of \cref{eq:annealed_dyn}
\begin{align}\label{eq: Z_tau_Z_1}
    \log Z_\tau = K_Y(1-\tau) + \log Z_1\,.
\end{align}    
Using this expression, we can expand the KL divergence between $\mu_\tau$ and $\pi$ as follows:
\begin{align*}
    \kl{\mu_\tau}{\pi} &= \int \log \frac{\mu_\tau}{\pi}\mu_\tau = \int \log \left( \frac{e^{-\tau(V_* - V_0) - V_0} Z_\tau^{-1} }{e^{-V_*} Z_1^{-1}}\right) \dd \mu_\tau \\
    &= \log Z_1 - \log Z_\tau + (1-\tau)\langle V_* - V_0 \rangle_{\mu_\tau} \\
    &= (1-\tau) \langle V_* - V_0 \rangle_{\mu_\tau} - K_Y(1-\tau)\,.
\end{align*}
Another fact about cumulant generating functions that we can exploit is the following differential relationship
\begin{align}
    - \langle V_* - V_0 \rangle_{\mu_\tau} = \frac{\dd}{\dd \tau } Z_\tau = -K_Y'(1-\tau)\,.
\end{align}
Altogether, this gives
\begin{align}\label{eq: kl_with_tau}
    \kl{\mu_\tau}{\pi} = (1-\tau)K_Y'(1-\tau) - K_Y(1-\tau)\,.
\end{align}
The general $q$-R{\'e}nyi case is deferred to the appendix, where the computation is similar.
\end{proof}

The following lemma uses both \textbf{(A1)} and \textbf{(A2)} to establish that $K_Y(z)$ is finite in some neighborhood of $z \in B_{\epsilon_0}(0)$, which implies that $K_Y$ admits the series expansion we will require in the sequel.
The proof is deferred to the appendix.

\begin{proposition}\label{prop: k_y is finite}Suppose \textbf{(A1)} and \textbf{(A2)} are satisfied. Then there exists some constant $\epsilon_0 > 0$ such that the cumulant generating function of $Y$, $K_Y(z) = \log \E[e^{Yz}]$ is finite on some neighborhood of $z \in B_{\epsilon_0}(0)$.  Moreover, inside this neighborhood, $K_Y(z)$ is holomorphic and we have the series expansion
\begin{align}
K_Y(z) = \sum_{n \geq 1}\frac{\kappa_n}{n!} z^n\,.
\end{align}
\end{proposition}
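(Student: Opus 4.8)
The plan is to bound $\E[e^{Yz}] = \int (\rho_0/\pi)^z \dd\pi$ for $z$ in a small real interval around $0$, and then invoke the standard fact (cited in the paragraph on cumulants) that finiteness of the moment/cumulant generating function on an open real interval forces holomorphy on a complex strip and hence the power series expansion. So the real content is a two-sided integrability estimate: I need $\int (\rho_0/\pi)^z \dd\pi < \infty$ for $z \in (-\epsilon_0, \epsilon_0)$, for some $\epsilon_0 > 0$.

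First I would handle $z > 0$ using \textbf{(A2)}. Write $\rho_0/\pi = (\rho_0/\pi^{1+\alpha})\cdot \pi^{\alpha}$; since $\inf_x \rho_0(x)/\pi(x)^{1+\alpha} =: c > 0$, we also want an \emph{upper} bound here — actually the more useful direction is: $(\rho_0/\pi)^z \dd\pi = (\rho_0/\pi)^{z-1}\rho_0 \,\dd x$ for the part of the integral where $\rho_0/\pi$ is large, and $\le (\rho_0/\pi)^0 \dd\pi$ trivially bounds it where $\rho_0/\pi \le 1$. More cleanly: for $z \in [0,1]$, by Jensen/concavity of $s \mapsto s^z$, $\int (\rho_0/\pi)^z \dd\pi \le \left(\int (\rho_0/\pi)\dd\pi\right)^z = 1$, so the positive side needs no assumption at all on $[0,1]$; the one subtlety is that I also want finiteness slightly past, but for the cumulant expansion a neighborhood of $0$ suffices, so $z \in [0,1)$ is already more than enough on the positive side.

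The genuinely delicate side is $z < 0$, i.e. controlling $\int (\pi/\rho_0)^{|z|}\dd\pi$, which is where \textbf{(A2)} does its work together with \textbf{(A1)}. Using $\rho_0(x) \ge c\,\pi(x)^{1+\alpha}$ from \textbf{(A2)}, we get $\pi/\rho_0 \le c^{-1}\pi^{-\alpha}$, hence $(\pi/\rho_0)^{|z|} \le c^{-|z|}\pi^{-\alpha|z|}$, and therefore $\int (\pi/\rho_0)^{|z|}\dd\pi \le c^{-|z|}\int \pi^{1-\alpha|z|}\dd x$. Now $\int \pi^{1-\alpha|z|}\dd x = Z_1^{-(1-\alpha|z|)}\int e^{-(1-\alpha|z|)V_*}\dd x$, and I must argue this is finite for $|z|$ small. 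Writing $1 - \alpha|z| = \theta \in (0,1)$, I would use Hölder with exponents $1/\theta$ and $1/(1-\theta)$ on the trivial splitting $e^{-\theta V_*} = e^{-\theta V_*}\cdot 1$ — but that doesn't obviously work since $\int 1\,\dd x = \infty$. The honest route: $\int e^{-\theta V_*}\dd x < \infty$ is \emph{not} automatic from $\int e^{-V_*} < \infty$ and $V_* \in L^1(\pi)$ in general; however \textbf{(A2)} with $\alpha$ was precisely chosen so that $\rho_0$ is a bona fide probability density dominating $c\pi^{1+\alpha}$, which forces $\int \pi^{1+\alpha}\dd x < \infty$, i.e. $\int e^{-(1+\alpha)V_*}\dd x < \infty$. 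Combining $\int e^{-V_*} < \infty$ and $\int e^{-(1+\alpha)V_*} < \infty$ with log-convexity of $\theta \mapsto \int e^{-\theta V_*}\dd x$ on $[1, 1+\alpha]$ — wait, I need $\theta < 1$, not $\theta \in [1,1+\alpha]$. So instead I should bound on the region $\{V_* \ge 0\}$ by $e^{-\theta V_*} \le 1$ is useless; rather use $e^{-\theta V_*}\le e^{-V_*} + e^{-(1+\alpha)V_*}$ is false pointwise in general. The correct elementary bound: for any $\theta \in (0,1)$ and any real $a$, $e^{-\theta a} \le e^{-a} + e^{\,?}$... Let me instead split $\R^d = \{V_* \ge 0\} \cup \{V_* < 0\}$: on $\{V_* \ge 0\}$, $e^{-\theta V_*} \le 1$ but the set can have infinite measure — however on $\{V_*\ge 0\}$ we have $\pi \le Z_1^{-1}$ bounded, still not enough. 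The clean fix: $e^{-\theta V_*} = (e^{-V_*})^{\theta}$, and on $\{V_* \ge 0\}$, $(e^{-V_*})^\theta \le (e^{-V_*})^{\theta'}$ for any $\theta' \le \theta$, so pick $\theta$ close to $1$; we still need $\int_{V_*\ge 0} e^{-\theta V_*} < \infty$ which by the same token reduces to needing it for \emph{some} exponent in $(0,1)$ — circular. Therefore the main obstacle is exactly this: establishing $\int e^{-\theta V_*}\dd x < \infty$ for $\theta$ slightly below $1$, and I expect the paper resolves it by noting that \textbf{(A1)}, $V_* \in L^1(\pi)$, gives $\int |V_*| e^{-V_*}\dd x < \infty$, and then for $\theta \in (0,1)$, $e^{-\theta V_*} = e^{-V_*}e^{(1-\theta)V_*}$, and on $\{V_* \le 0\}$ this is $\le e^{-V_*}$, while on $\{V_* \ge 0\}$ one uses $e^{(1-\theta)V_*} \le 1 + C_\delta e^{\delta V_*}$... hmm. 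Cleanest of all: on $\{V_* \ge 0\}$, $e^{-\theta V_*} \le \frac{1}{1-\theta}\cdot\frac{(1-\theta)V_*\, e^{-\theta V_*}}{V_*}$ — no. I will instead simply appeal to: \textbf{(A2)} gives $\int e^{-(1+\alpha)V_*} < \infty$; combined with $\int e^{-V_*} < \infty$, the function $p \mapsto \int e^{-pV_*}\dd x$ is finite at $p=1$ and $p = 1+\alpha$, hence (by log-convexity on intervals where it is finite, or just Hölder) finite on $[1, 1+\alpha]$; this already covers $K_Y(z) = \int(\rho_0/\pi)^z\dd\pi$ for $z$ in a neighborhood on the \emph{negative} side via $(\rho_0/\pi)^{z} \le$ bounds giving exponent $1 - \alpha|z| < 1$ — so I do after all need exponents below $1$. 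Given the tension, I expect the actual argument uses \textbf{(A2)} slightly differently: $\rho_0/\pi \ge c\,\pi^{\alpha}$ means $(\rho_0/\pi)^{z}$ for $z<0$ is $\le c^{z}\pi^{\alpha z}$ and $\int \pi^{1+\alpha z}\dd x$; finiteness for small $|z|$ follows from finiteness at $z=0$ ($\int\pi = 1$) together with continuity of $z \mapsto \int \pi^{1+\alpha z}$ near $0$ — and THAT continuity/finiteness near $1$ is what \textbf{(A1)} secures, since $\frac{d}{dp}\big|_{p=1}\int \pi^p = \int \pi\log\pi = -\E_\pi[V_*] - \log Z_1$ is finite by \textbf{(A1)}, so $\int \pi^p$ stays finite for $p$ in a neighborhood of $1$. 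That is the crux. Once both-sided finiteness on $(-\epsilon_0,\epsilon_0)$ is in hand, the holomorphy on the corresponding vertical strip and the convergent Taylor series $K_Y(z) = \sum_{n\ge1}\kappa_n z^n/n!$ follow immediately from the cited classical result, completing the proof.

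I would present this by (i) reducing to the integrability claim $\int(\rho_0/\pi)^z\dd\pi < \infty$ for real $z$ near $0$; (ii) handling $z \ge 0$ by Jensen (no assumption needed); (iii) handling $z < 0$ via \textbf{(A2)} to reduce to $\int \pi^{1+\alpha z}\dd x$, then using \textbf{(A1)} to get finiteness of $p\mapsto\int\pi^p\dd x$ on a neighborhood of $p=1$; (iv) citing \cite{ShiryaevProbability} for holomorphy and the series. The main obstacle, as flagged, is step (iii): making rigorous that $\int \pi^p\dd x < \infty$ for $p$ in a one-sided or two-sided neighborhood of $1$ using only $V_* \in L^1(\pi)$ — I anticipate needing a truncation argument on $\{V_* \ge 0\}$ versus $\{V_* < 0\}$ and a convexity/Hölder interpolation between the exponents $p=1$ and $p = 1+\alpha$ supplied by \textbf{(A2)}.
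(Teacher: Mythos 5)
Your proposal follows the same route as the paper's own proof: the positive side $z\in[0,1)$ is handled by convexity (the paper integrates $e^{-(1-z)V_*-zV_0}\le (1-z)e^{-V_*}+z e^{-V_0}$, which is your Jensen bound in multiplicative form), the negative side is reduced via \textbf{(A2)} to the finiteness of $F(\theta)=\int e^{-\theta V_*}\,dx$ for $\theta=1-\alpha|z|$ slightly below $1$, and the holomorphy and Taylor expansion are then imported from the classical result (\cref{lem:analytic_H}, citing Shiryaev). Your steps (i)--(iv) are exactly the paper's decomposition.

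The step you flag as the main obstacle is precisely the step the paper dispatches in one line: it asserts that \textbf{(A1)} makes $F$ ``differentiable at $t=1$'' because the formal derivative $-\int V_*\,e^{-V_*}\,dx$ is finite, and concludes that $F$ is finite on an interval $(1-2\epsilon_1,1]$. Your suspicion that this is not automatic is correct, and your attempted repairs fail for the reasons you yourself give (interpolating between $p=1$ and $p=1+\alpha$ only covers exponents $\ge 1$, the wrong side; finiteness of the formal derivative at the endpoint does not license differentiation under the integral sign without domination, which would presuppose finiteness nearby). In fact the implication genuinely fails: if $V_*$ takes the value $n$ on disjoint sets of Lebesgue measure $e^{n}/n^{3}$, then $\int e^{-V_*}\,dx=\sum_n n^{-3}$ and $\int V_* e^{-V_*}\,dx=\sum_n n^{-2}$ converge, so \textbf{(A1)} holds, yet $\int e^{-\theta V_*}\,dx=\sum_n e^{(1-\theta)n}n^{-3}=\infty$ for every $\theta<1$; taking $\rho_0\propto \pi^{1+\alpha}$ (which satisfies \textbf{(A2)}) then makes $K_Y(z)=\infty$ for all $z<0$. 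So the honest verdict is that you reproduced the paper's argument faithfully, and the single gap you could not close is not closed by the paper either; a rigorous version of the proposition appears to need a strengthening of \textbf{(A1)} (e.g., $\int e^{-\theta V_*}\,dx<\infty$ for some $\theta<1$, equivalently a small exponential moment of $V_*$ under $\pi$) rather than a cleverer estimate.
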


We conclude with the proof of our main result.
\begin{proof}[Proof of \Cref{thm: main}]
We begin with the expression of the KL divergence. Note that since $K_Y(z)$ is smooth for $z$ sufficiently close to the origin, it holds that
\begin{align*}
K_Y'(z) = \sum_{n\geq 1} \frac{\kappa_n}{(n-1)!}z^{n-1}\,.
\end{align*}
Using the parameterization of \cref{eq: kl_with_tau} and the series expansion for $K_Y'(1-\tau)$, our expression for $\kl{\mu_\tau}{\pi}$ reads
\begin{align*}
\kl{\mu_\tau}{\pi} &= (1-\tau) \sum_{n\geq 1} \frac{\kappa_n}{(n-1)!}(1-\tau)^{n-1} - \sum_{n\geq 1}\frac{\kappa_n}{n!}(1-\tau)^n \\
&= \sum_{n\geq 1} \kappa_n \left(\frac{n}{n!} - \frac{1}{n!}\right) (1-\tau)^n \\
&= \sum_{n\geq 2} \frac{\kappa_n}{n(n-2)!} (1-\tau)^n\,.
\end{align*}
Expanding the relation and replacing $\tau(t) = 1-e^{-t}$ gives
\begin{align*}
\kl{\rho_t}{\pi} = \frac{\kappa_2}{2}e^{-2t} + \sum_{n\geq 3}\frac{\kappa_n}{n(n-2)!}e^{-nt}\,.
\end{align*}

We now do the same manipulations for $\cR_q(\mu_\tau\|\pi)$.
\begin{align*}
\cR_q(\mu_\tau \| \pi) &= \frac{1}{q-1}\sum_{n\geq 1}\frac{\kappa_n}{n!}(q(1-\tau))^n - \frac{q}{q-1}\sum_{n \geq 1} \frac{\kappa_n}{n!}(1-\tau)^n \\
&= \frac{1}{q-1}\left( \frac{\kappa_1}q(1-\tau) + \sum_{n \geq 2}q^n \frac{\kappa_n}{n!} (1-\tau)^n \right) - \frac{q}{q-1} \left( \kappa_1 (1-\tau) + \sum_{n \geq 2} \frac{\kappa_n}{n!} (1-\tau)^n \right) \\
&= \sum_{n\geq 2} \frac{q^n - q}{q-1} \frac{\kappa_n}{n!}(1-\tau)^n\,.
\end{align*} 
Substituting $\tau(t) = 1 - e^{-t}$ and expanding out the first term yields
\begin{align*}
\cR_q(\rho_t\|\pi) = q \frac{\kappa_2}{2}e^{-2t} + \sum_{n\geq 3} \frac{q^n - q}{q-1} \frac{\kappa_n}{n!} e^{-nt}\,.
\end{align*}

Our proof concludes by taking the limit $t \to \infty$, which we fully justify in the appendix (\cref{lem: o_limit}).
\end{proof}

\section{Numerical simulations} \label{sec:simulations}
We present simple numerical simulations that demonstrates our asymptotic convergence rate of the KL divergence the FR gradient flows, as well as a comparison with the WFR- and W-GFs. We consider two target distributions over the set $[-\pi,\pi)$, each with two initializations:
\begin{enumerate}
    \item Target distribution $\pi_1$: We set $\pi_1 \propto e^{-V_1}$ with $V_1(x) = 2.5\cos(2x) + 0.5\sin(x)$. This distribution has two modes with different weights and has been studied previously by \cite{lu2019accelerating}. We consider two initial distributions: 
    \begin{enumerate}[label=(\alph*)]
    \item $\pi_a \propto e^{-V_a}$ with $V_a = - V_1$, which has two modes in locations where $\pi$ has little mass.
    \item $\pi_b \propto e^{-V_b}$ with $V_b = 2.5\cos(2x)$, which has two modes in almost the same positions as $\pi$, but with equal weight.
    \end{enumerate}
    \item Target distribution $\pi_2$: We set $\pi_2 \propto e^{-V_2}$ with $V_2(x) = -6\cos(x)$. This distribution has one mode. We consider two initial distributions: 
    \begin{enumerate}[label=(\alph*)]
    \item[(c)] $\pi_c \propto e^{-V_c}$ with $V_c = - V_2$, which has one mode in a location where $\pi$ has little mass.
    \item[(d)] $\pi_d \propto e^{-V_d}$ with $V_d = 0$, which is the uniform distribution.
    \end{enumerate}
\end{enumerate}

\begin{figure}[h!]
    \centering
    \begin{tabular}{cc}
    \includegraphics[width=0.39\textwidth]{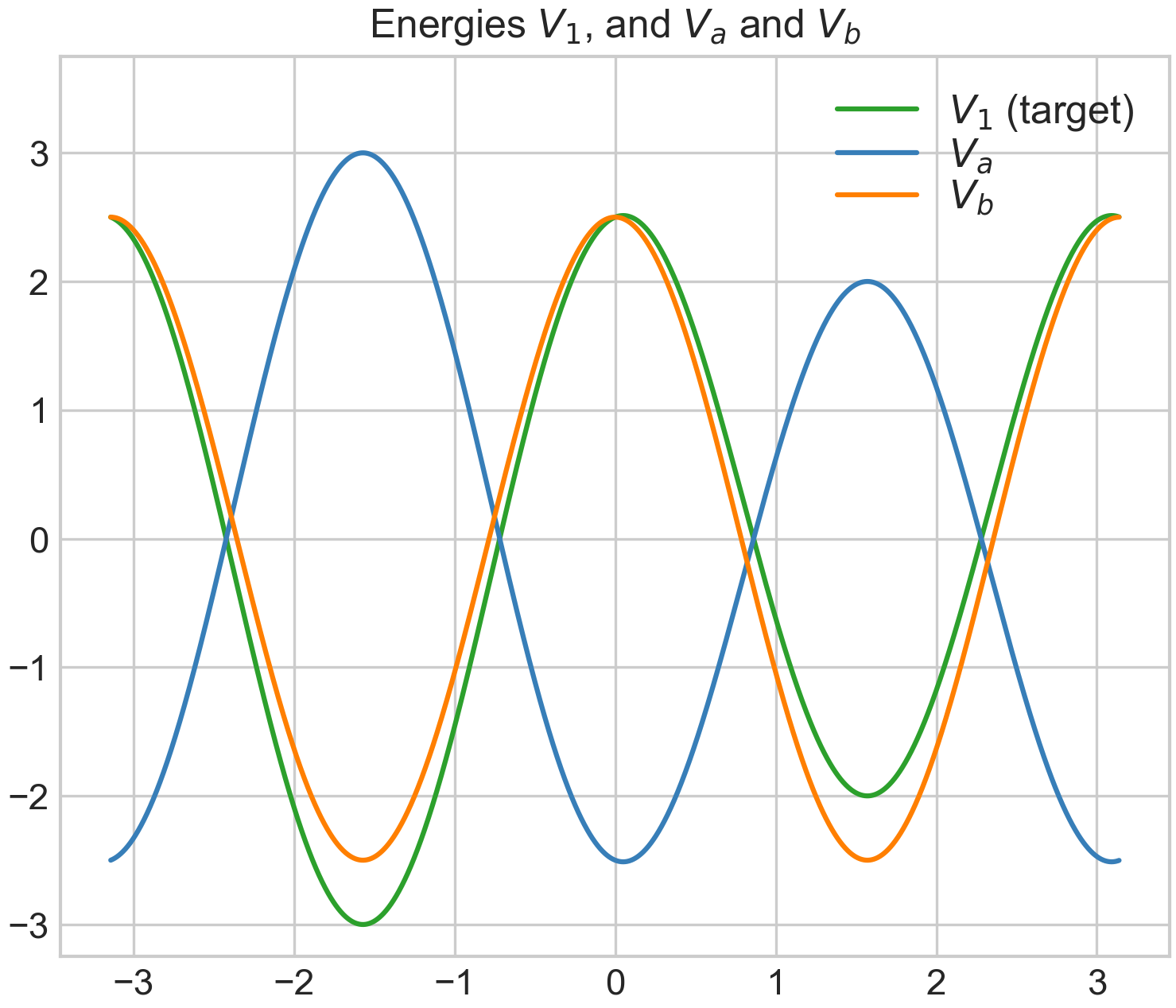} & \includegraphics[width=0.39\textwidth]{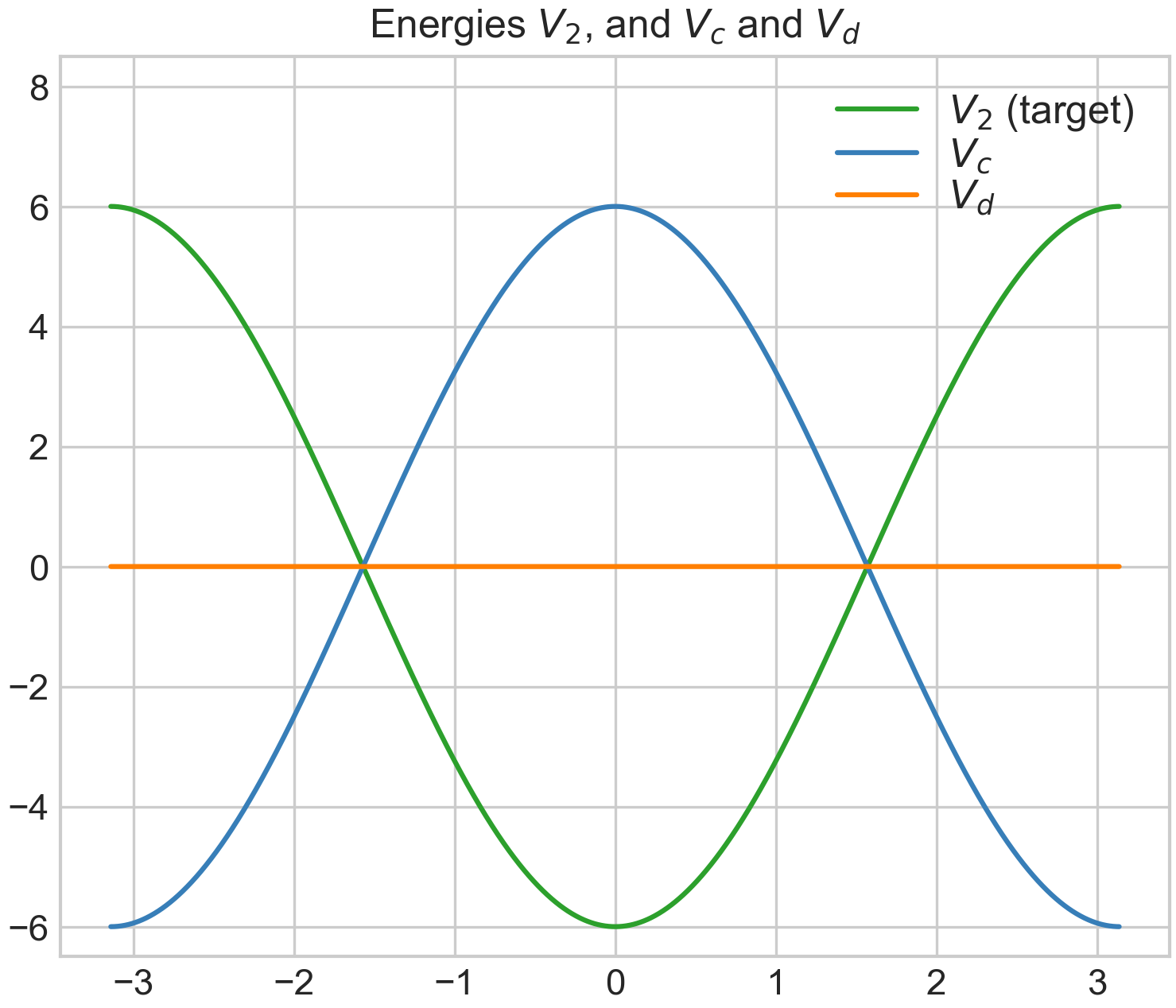}
    \end{tabular}
    \caption{Energies of the target and initial distributions.}
    \label{fig:energies}
\end{figure}

\cref{fig:energies} shows the target energies $V_1$, $V_2$ and the initial energies $V_a$, $V_b$, $V_c$, $V_d$ introduced above. 
\cref{fig:rates} shows the evolution of the KL divergence along the FR, WFR and W gradient flows. It also contains plots of the dominant term $\frac{\kappa_2}{2}e^{-2t}$ of the approximation of the KL divergence decay for FR flows (see \cref{thm: main}), displayed as dotted lines. \cref{table:slopes} shows the slopes of each curve from \cref{fig:rates}, at large times (see \cref{sec:details_sim} for details on the computation of slopes). 

\begin{figure}[h]
    \centering
    \begin{tabular}{cc}
    \includegraphics[width=0.42\textwidth]{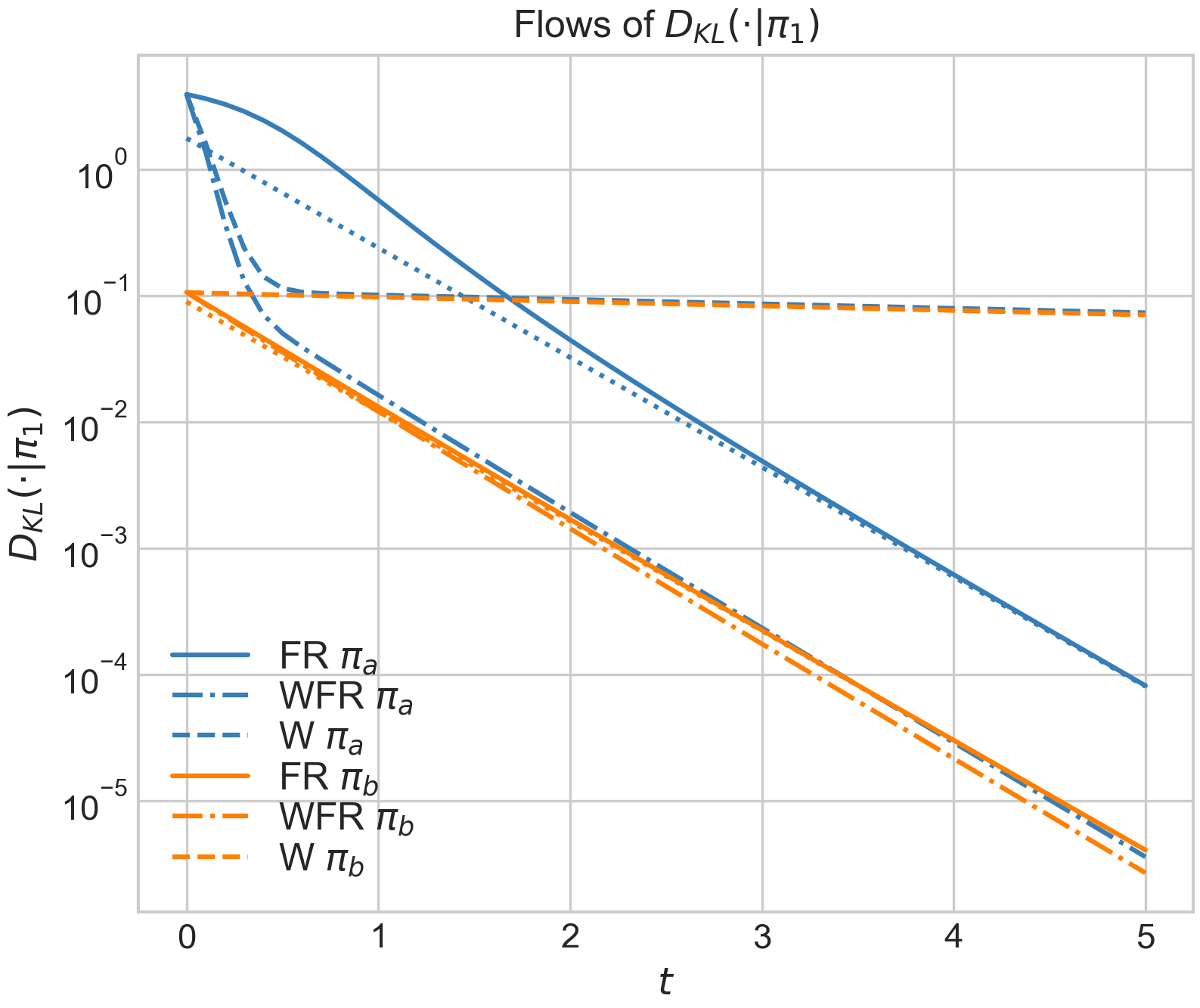} & 
    \includegraphics[width=0.42\textwidth]{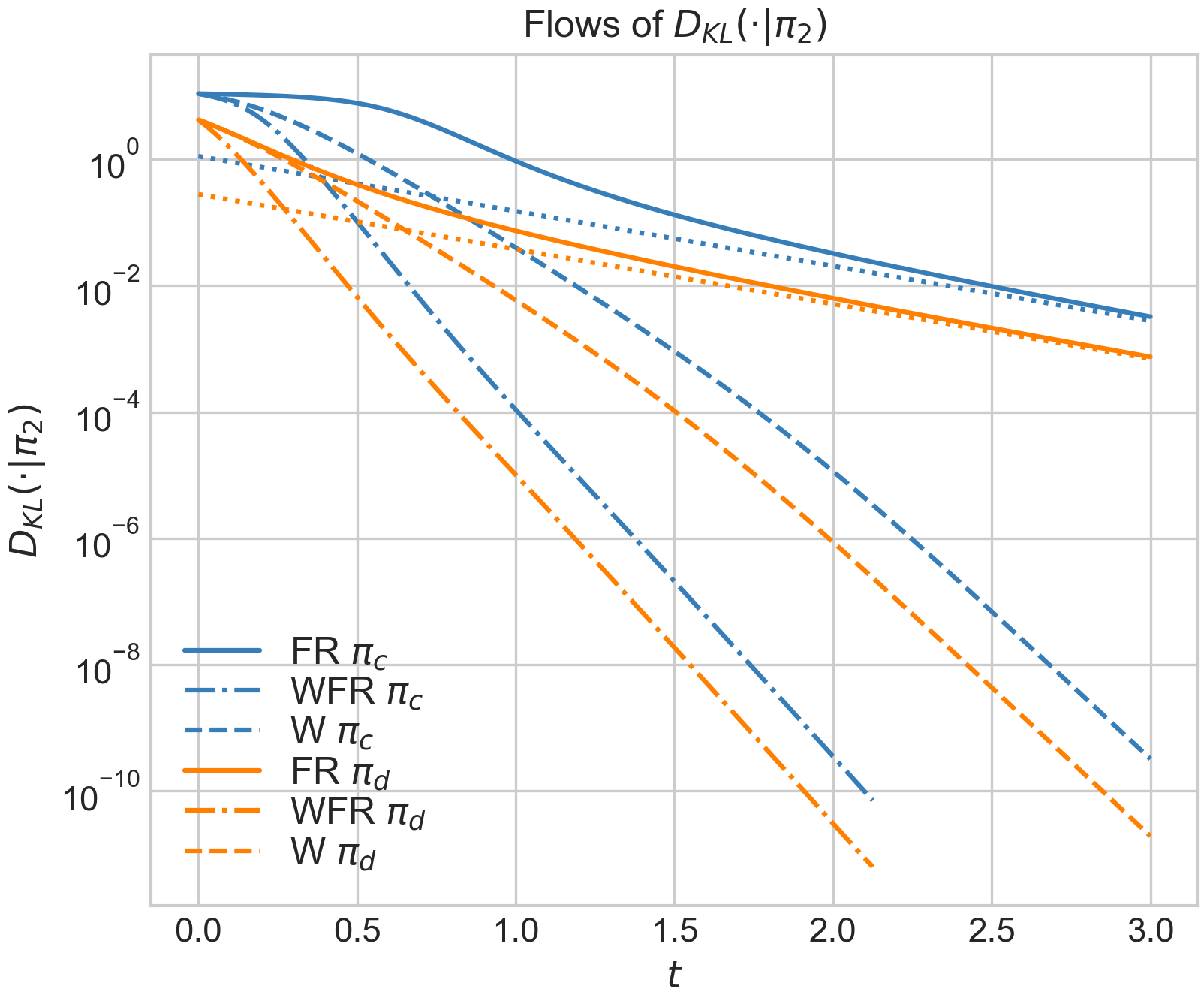} 
    \end{tabular}
    \caption{Evolution of the KL divergence with respect to $\pi_1$ (\emph{left}) and $\pi_2$ along their respective FR (\emph{solid lines}), WFR (\emph{dash-dotted lines}) and W (\emph{dashed lines}) gradient flows. Each plot contains flows initialized at two probability measures: in the left plot these are $\pi_a$ (\emph{blue}, top curves at $t=0$) and $\pi_b$ (\emph{orange}); in the right plot, $\pi_c$ (\emph{blue}, top curves at $t=0$) and $\pi_d$ (\emph{orange}). The \emph{dotted} lines show the curves $\frac{\kappa_2}{2}e^{-2t}$ (for the appropriate values $\kappa_2$), introduced in \autoref{thm: main}.}
    \label{fig:rates}
\end{figure}

Some observations are in order:
\begin{itemize}
    \item As predicted by \cref{thm: main}, the curves $\kl{\rho_t^{\text{FR}}}{\pi}$ approach the curves $\frac{\kappa_2}{2}e^{-2t}$ as $t$ grows. 
    \item For $\pi_1$, the curves $\kl{\rho_t^{\text{FR}}}{\pi}$ and $\kl{\rho_t^{\text{WFR}}}{\pi}$ initialized at $\pi_b$ are very close for small times. The reason is that $\nabla V_1$ and $\nabla V_b$ are very close in the regions where $\pi_1$ and $\pi_b$ have most of the mass. Consequently, the term $\nabla \cdot \left(\rho_t^{\text{WFR}} (\nabla \log \rho_t^{\text{WFR}} + \nabla V_1) \right)$, which is the difference between the FR and the WFR PDEs, is small at initialization.
    \item The curves $\kl{\rho_t^{\text{W}}}{\pi}$ behave very differently for $\pi_1$ and $\pi_2$ (see \cref{table:slopes}). Indeed, since $\pi_1$ is bimodal $C_{\texttt{LSI}}(\pi_1)$ is quite large (thus convergence is slow), whereas $\pi_2$ is unimodal, with a much smaller log-Sobolev constant. 
    \item The curves $\kl{\rho_t^{\text{WFR}}}{\pi}$ also behave differently for both target distributions. For $\pi_1$, it decays only slightly faster than $\kl{\rho_t^{\text{FR}}}{\pi}$, while for $\pi_2$ it goes down much faster than both $\kl{\rho_t^{\text{FR}}}{\pi}$ and $\kl{\rho_t^{\text{WFR}}}{\pi}$. Interestingly, looking at \cref{table:slopes} we observe that the asymptotic slopes of the WFR are very close to the sum of the slopes for FR and W. This seems to indicate that at large times, the KL divergence decays like $e^{- 2t - \frac{2t}{C_{\texttt{LSI}}}}$, i.e. that the W and FR terms act more or less independently.
\end{itemize}

\begin{table}[h!]
\centering
\begin{tabular}{l|ll|ll}
 & \multicolumn{2}{c}{Target $\pi_1$}  &  \multicolumn{2}{c}{Target $\pi_2$} \\
 & Init. $\pi_a$ & Init. $\pi_b$ & Init. $\pi_c$ & Init. $\pi_d$ \\ \hline
FR & -2.0016 & -2.0002 & -2.0028 & -2.0014 \\
WFR & -2.0771 & -2.0759 & -12.8190 & -12.8632 \\
W & -0.0811 & -0.0811 & -10.7784 & -10.8538
\end{tabular}
\caption{Large-time slopes of the KL divergence vs. time curves in a semi-logarithmic plot (\cref{fig:rates}), for the three flows. See \cref{sec:details_sim} for details on the computation of the slopes.}
\label{table:slopes}
\end{table}

\section{Conclusion}
In this work, using a relatively simple proof technique, we showed that the Kullback-Leibler divergence along its Fisher-Rao gradient flow $(\rho_t^{\text{FR}})_{t \geq 0}$ can be written as a power-series expansion, resulting in a tight asymptotic convergence rate for large times. 
A similar expansion holds for $\cR_q(\rho_t^{\text{FR}}\|\pi)$, where $\cR_q$ is any $q$-R{\'e}nyi divergence.
Our findings were verified with simple numerical experiments, where we also simulated Wasserstein and Wasserstein-Fisher-Rao gradient flows.
Our simulations indicated that, in some cases, the convergence rate of the WFR gradient flow scales like $e^{- (2+ (2/C_{\texttt{LSI}}))t}$, an observation that we hope can be made precise in future work.
A second direction is to extend our proof technique from the KL divergence to general Bregman divergences.

\subsubsection*{Acknowledgments}
The authors thank Joan Bruna, Jonathan Niles-Weed, Sinho Chewi, and Andre Wibisono for helpful discussions. CD acknowledges Meta AI Research as a funding source. AAP acknowledges NSF Award 1922658 and and Meta AI Research.

\bibliography{main}
\bibliographystyle{tmlr}

\appendix
\section{Remaining proofs} \label{sec: remaining_proofs}

\begin{proposition}[Uniqueness of the Fisher-Rao gradient flow of the KL divergence] \label{prop:uniqueness}
Given a target potential $V_*$ and an initial measure $\rho_0$, the solution of \cref{eq:FR_KL} is unique.
\end{proposition}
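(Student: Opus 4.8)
The plan is to exploit the fact that \cref{eq:FR_KL} contains no spatial derivatives: it is a coupled family of ordinary differential equations in $t$, one per $x \in \Rd$, linked only through the single scalar $c_t \defeq \langle \log \rho_t + V_* \rangle_{\rho_t}$, which (since $\pi \propto e^{-V_*}$) equals $\kl{\rho_t}{\pi}$ up to the additive constant $\log Z_1$. Passing to the log-density $u_t(x) \defeq \log \rho_t(x)$ and dividing \cref{eq:FR_KL} by $\rho_t(x)$ rewrites the dynamics as the \emph{affine} equation $\partial_t u_t(x) = -u_t(x) - V_*(x) + c_t$; one may assume $\rho_0 > 0$ everywhere (as in the paper's setting $\rho_0 \propto e^{-V_0}$), so this reduction is well posed and any solution stays strictly positive.

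Next I would compare two solutions directly. Suppose $(\rho_t)_{t \geq 0}$ and $(\tilde\rho_t)_{t \geq 0}$ both solve \cref{eq:FR_KL} with the same $\rho_0$, with scalar correction terms $c_t$ and $\tilde c_t$, and set $w_t \defeq \log \rho_t - \log\tilde\rho_t$. Subtracting the two affine equations, both $V_*$ and all $x$-dependence other than through $w_t(x)$ cancel, leaving $\partial_t w_t(x) = - w_t(x) + (c_t - \tilde c_t)$. Multiplying by the integrating factor $e^t$ and integrating gives
\begin{align*}
    e^t w_t(x) - w_0(x) = \int_0^t e^s \big(c_s - \tilde c_s\big) \dd s\,,
\end{align*}
whose right-hand side is independent of $x$; since $w_0 \equiv 0$, the function $w_t$ is constant in $x$, say $w_t \equiv \beta_t$, i.e. $\rho_t = e^{\beta_t}\tilde\rho_t$ pointwise. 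As $\rho_t$ and $\tilde\rho_t$ are both probability densities, integrating this identity forces $e^{\beta_t} = 1$, hence $\rho_t \equiv \tilde\rho_t$ for all $t \geq 0$. Running the same integrating-factor computation on a single solution instead yields the closed form $u_t(x) = e^{-t}\log\rho_0(x) - (1-e^{-t})V_*(x) + \gamma_t$, with $\gamma_t$ fixed by the constraint $\int \rho_t = 1$; this simultaneously re-derives $\rho_t = \mu_{1-e^{-t}}$ of \cref{prop:FR_explicit} and establishes existence.

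The main obstacle is not the algebra but specifying the solution class and checking the manipulations are valid within it: one needs $t \mapsto u_t(x)$ to be (say) $C^1$ for each $x$, and one needs $c_t$ to be well defined and locally integrable in $t$ so that the integrating-factor step is legitimate. Assumption \textbf{(A1)} ($V_* \in L^1(\pi)$) makes the latter a mild requirement — together with the a priori control on $\kl{\rho_t}{\pi}$ coming from the existing theory (or, a posteriori, from the explicit representation above) it gives that $c_t = \kl{\rho_t}{\pi} - \log Z_1$ is finite and continuous in $t$. Once these regularity points are settled, the argument is complete.
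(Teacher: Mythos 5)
Your proof is correct and rests on the same observation as the paper's: \cref{eq:FR_KL} is a pointwise ODE in log-space whose only spatial coupling is through the scalar $\langle \log\rho_t + V_*\rangle_{\rho_t}$, and the normalization $\int\rho_t = 1$ pins down the resulting spatially constant ambiguity. The paper packages this by transforming to the unnormalized flow $\partial_t \log\mu_t = -(\log\mu_t + V_*)$ and invoking uniqueness of that decoupled ODE together with invertibility of the transformation, whereas you compare two solutions directly with an integrating factor; the two arguments are interchangeable (and share the same implicit regularity requirement that the scalar term be locally integrable in $t$), and your closed form for $u_t$ indeed reproduces \cref{prop:FR_explicit}.
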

\begin{proof}
    Consider the PDE
    \begin{align} \label{eq:PDE_unnormalized}
    \partial_t \mu_t(x) = -\big( \log(\mu_t(x)) + V_*(x)  
    \big) \mu_t(x), \qquad \mu_0 = \rho_0
    \end{align}
    Note that this is in fact an ODE for each point $x$, that we can rewrite as $\partial_t \log \mu_t(x) = -\big( \log(\mu_t(x)) + V_*(x) \big)$. The unique solution of this ODE with initial condition $\log \mu_0(x)$ is $\log \mu_t(x) = (\log \mu_0(x)-V_*(x)) e^{-t} + V_*(x)$. Thus, we conclude that \cref{eq:PDE_unnormalized} has a unique solution.

    Now, given a solution $\rho_t$ of \cref{eq:FR_KL} with initial condition $\rho_0$, define $\tilde{\rho}_t$ as
    \begin{align} \label{eq:map_rho_to_tilderho}
        \log \tilde{\rho}_t(x) = \log \rho_t(x) - \int_0^t \big\langle \log(\rho_s) + V_* \big\rangle_{\rho_s} \dd s.
    \end{align}
    Remark that $\tilde{\rho}_t$ is a solution of \cref{eq:PDE_unnormalized}, since
    \begin{align*}
        \partial_t \log \tilde{\rho}_t(x) &= \partial_t \log \rho_t(x) - \big\langle \log(\rho_t) + V_* \big\rangle_{\rho_t} = -\big( \log(\rho_t(x)) + V_*(x) - \big\langle \log(\rho_t) + V_* \big\rangle_{\rho_t} \big) - \big\langle \log(\rho_t) + V_* \big\rangle_{\rho_t} \\ &
        = -\big( \log(\rho_t(x)) + V_*(x) \big).
    \end{align*}
    Also, note that the map $(\rho_t)_{t\geq 0} \to (\tilde{\rho}_t)_{t\geq 0}$ defined by \cref{eq:map_rho_to_tilderho} is invertible, as $\rho_t(x) = \tilde{\rho_t}(x)/\int \tilde{\rho}_t(y) \dd y$. This follows from the fact that $\rho_t$ and $\tilde{\rho}_t$ are proportional to each other, and that $\rho_t$ integrates to 1.
    
    Finally, suppose that $\rho_t^a$ and $\rho_t^b$ are two solutions of \cref{eq:FR_KL} with initial condition $\rho_0$. Via the construction \cref{eq:map_rho_to_tilderho}, they yield solutions $\tilde{\rho}_t^a$ and $\tilde{\rho}_t^b$ of \cref{eq:PDE_unnormalized} with initial condition $\rho_0$. The uniqueness of the solution of \cref{eq:PDE_unnormalized} implies that $\tilde{\rho}_t^a = \tilde{\rho}_t^b$. Since the map $(\rho_t)_{t\geq 0} \to (\tilde{\rho}_t)_{t\geq 0}$ is invertible, we obtain that $\rho_t^a = \rho_t^b$, which concludes the proof
\end{proof}

\begin{proof}[Proof of \cref{prop: diff_rep} (Continued)]
    We perform similar manipulations as in the case with the KL divergence:
    \begin{align*}
        \cR_q(\mu_\tau\|\pi) &= \frac{1}{q-1} \log \int \frac{e^{-q \tau (V_* - V_0) - q V_0} (Z_\tau)^{-q} }{ e^{-q V_*} (Z_1)^{-q} } \dd \pi \\
        &= \frac{1}{q-1}\log \int e^{q(1-\tau)(V_* - V_0)} \left(\frac{Z_\tau}{Z_1}\right)^q \dd \pi \\
        &= \frac{1}{q-1}K_Y(1-\tau) - \frac{q}{q-1} (\log Z_\tau - \log Z_1) \\
        &= \frac{1}{q-1}K_Y(1-\tau) - \frac{q}{q-1} K_Y(1-\tau)\,,
    \end{align*}
    where in the last line we again used \cref{eq: Z_tau_Z_1}. This completes the proof.
\end{proof}

\begin{proof}[Proof of \Cref{prop: k_y is finite}]
By \textbf{(A1)}, the partition function $F(t) = \int_{\R^d} e^{-tV_*(x)} \, dx$ is differentiable at $t=1$. This is because $F'(t) = -\int V_*(x) \, d\pi(x)$. Hence, $F(t)$ is finite on an interval $(1-2\epsilon_1,1]$ for some $\epsilon_1$.

Note that the assumption \textbf{(A2)} can be written equivalently as $\xi := \inf_x \alpha V_*(x) - V_0(x) > -\infty$. We obtain that for all $\epsilon \in [0,\epsilon_1/\alpha)$,
    \begin{align}
    \begin{split}
        -\epsilon(V_*(x)-V_0(x)) - V_*(x) &= -\epsilon((1+\alpha)V_*(x)-V_0(x)) + (\epsilon \alpha - 1) V_*(x) \\ &\leq -\epsilon \xi +  (\epsilon \alpha - 1) V_*(x) \leq -\epsilon \xi +  (\epsilon_1 - 1) V_*(x)
    \end{split}
    \end{align}
    Equivalently,
    \begin{align}
        \exp(K_Y(-\epsilon)) = \int_{\R^d} e^{-\epsilon(V_*(x)-V_0(x)) - V_*(x)} \, dx \leq e^{-\epsilon \xi} \int_{\R^d} e^{-(1 - \epsilon_1) V_*(x)} \, dx = e^{-\epsilon \xi} F(1-\epsilon_1) < +\infty.
    \end{align}
    Also, for all $\epsilon \in [0,1)$, using the convexity of the exponential function we have that
    \begin{align}
        \exp(K_Y(\epsilon)) &= \int_{\R^d} e^{\epsilon(V_*(x)-V_0(x)) - V_*(x)} \, dx = \int_{\R^d} e^{-(1-\epsilon)V_*(x)-\epsilon V_0(x)} \, dx \\ &\leq \int_{\R^d} (1-\epsilon) e^{-V_*(x)} + \epsilon e^{-V_0(x)} \, dx = (1-\epsilon) Z_1 + \epsilon Z_0 < +\infty.
    \end{align}
    Hence, the cumulant-generating function $K_Y(t) = \log \E e^{tY}$ is finite on a neighborhood $(-\epsilon_0,\epsilon_0)$ with $\epsilon_0 = \min\{1, \epsilon_1/\alpha \}$. Applying Lemma \ref{lem:analytic_H}, we conclude that there exists $\epsilon > 0$ such that for $z \in B_{\epsilon}(0)$, we have that $K_Y(z) = \sum_{n = 1}^{+\infty} \frac{\kappa_n}{n!} z^n$.
\end{proof}

The following lemma, which we make explicit, is a well-known fact in probability theory. In short, since the moment-generating function is analytic in some neighborhood, and is non-negative, taking the logarithm is safe as everything is analytic. The interested reader can consult e.g. \cite[Section II.12.8]{ShiryaevProbability} which dissects this in detail. 
\begin{lemma} \label{lem:analytic_H}
    Assume that the cumulant-generating function $K_Y(t) = \log \E e^{tY}$ is finite on a neighborhood $(-\epsilon_0,\epsilon_0)$ of zero. Then,
    $K_Y(z) = \log \E e^{zY}$ as a function on the complex plane is holomorphic on the open ball $B_{\epsilon}(0)$ of radius $\epsilon$ centered at zero, for some $\epsilon > 0$. Moreover, for $z \in B_{\epsilon}(0)$, we have that
    \begin{align} \label{eq:expansion_lemma}
        K_Y(z) = \sum_{n = 1}^{+\infty} \frac{\kappa_n}{n!} z^n.
    \end{align}
\end{lemma}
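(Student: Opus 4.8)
The plan is to first show that the \emph{moment}-generating function $M_Y(z) \defeq \E[e^{zY}]$ extends to a holomorphic function of the complex variable $z$ on a neighborhood of the origin, and then to obtain $K_Y = \log M_Y$ and its power series by composing with a branch of the logarithm and reading off Taylor coefficients. \textbf{Step 1 (integrability on a complex strip).} For real $t$ with $|t| < \epsilon_0$, both $\E[e^{tY}]$ and $\E[e^{-tY}]$ are finite by hypothesis, and since $e^{t|Y|} \leq e^{tY} + e^{-tY}$ pointwise, we get $\E[e^{t|Y|}] < \infty$. Consequently, for any complex $z$ with $|\Re z| < \epsilon_0$ we have $|e^{zY}| = e^{(\Re z) Y} \leq e^{|\Re z|\,|Y|}$, which is integrable; hence $M_Y(z)$ is well-defined on the vertical strip $S \defeq \{z \in \CC : |\Re z| < \epsilon_0\}$.

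\textbf{Step 2 (holomorphy of $M_Y$).} For each fixed outcome, $z \mapsto e^{zY}$ is entire. Fix a closed disk $D \subset \{z : |\Re z| \le \epsilon_0'\}$ for some $\epsilon_0' < \epsilon_0$; then $|e^{zY}| \leq e^{\epsilon_0'|Y|}$ uniformly over $z \in D$, and the right-hand side is integrable by Step 1. By the standard theorem on holomorphy of parameter integrals (equivalently, combine Fubini's theorem with Morera's theorem: $\oint_\gamma M_Y(z)\,dz = \E\big[\oint_\gamma e^{zY}\,dz\big] = 0$ for every closed contour $\gamma$ in the interior of $D$, since each inner integral vanishes by Cauchy's theorem), $M_Y$ is holomorphic on the interior of $D$; as $D$ was arbitrary, $M_Y$ is holomorphic on $S$. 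This is exactly the analysis carried out in \cite[Section II.12.8]{ShiryaevProbability}.

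\textbf{Step 3 (logarithm and coefficients).} We have $M_Y(0) = 1$ and $M_Y$ is continuous, so there is $\epsilon \in (0,\epsilon_0)$ with $M_Y(z) \neq 0$ on the ball $B_\epsilon(0)$. Since $B_\epsilon(0)$ is simply connected, $M_Y$ admits a holomorphic logarithm there; choosing the branch with value $0$ at the origin, which agrees with the real-variable definition $K_Y(t) = \log\E e^{tY}$ near $t=0$, we obtain that $K_Y$ is holomorphic on $B_\epsilon(0)$. A holomorphic function equals its Taylor series on $B_\epsilon(0)$, so $K_Y(z) = \sum_{n \geq 0} \frac{K_Y^{(n)}(0)}{n!} z^n$; the $n=0$ term vanishes since $K_Y(0) = \log 1 = 0$, and $K_Y^{(n)}(0) = \kappa_n$ by the definition of the cumulants, giving \cref{eq:expansion_lemma}.

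The main obstacle is Step 2: rigorously interchanging expectation and contour integration (or, equivalently, differentiating under the integral sign) to conclude holomorphy of $M_Y$. The uniform domination estimate from Step 1 is precisely what licenses the Fubini–Morera argument; the remaining steps are routine complex analysis. A secondary point of care is in Step 3, where the branch of the logarithm must be pinned down so that the cumulants extracted from the holomorphic $K_Y$ coincide with the classical ones defined via the real cumulant-generating function.
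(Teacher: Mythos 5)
Your proof is correct and follows exactly the route the paper intends (and defers to \cite[Section II.12.8]{ShiryaevProbability} for): dominated integrability of $e^{zY}$ on a complex strip gives holomorphy of the moment-generating function via a Fubini--Morera argument, and since $M_Y(0)=1$ one may take the holomorphic logarithm on a small ball and read off the cumulants as Taylor coefficients. The paper offers only this sketch plus the citation, so your write-up simply supplies the routine details it omits.
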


\begin{lemma}[End of the proof of \cref{thm: main}] \label{lem: o_limit}
    We have that
    \begin{align}
        |\kl{\rho_t}{\pi} - \frac{\kappa_2}{2} e^{-2t}| = O(e^{-3t}), \qquad |\cR_q(\rho_t\| \pi) - \frac{q \kappa_2}{2}e^{-2t}| = O(e^{-3t}).
    \end{align}
\end{lemma}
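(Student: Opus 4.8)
The plan is to start from the two explicit power series obtained in the proof of \Cref{thm: main}, namely
\begin{align*}
\kl{\rho_t}{\pi} = \frac{\kappa_2}{2}e^{-2t} + \sum_{n\geq 3}\frac{\kappa_n}{n(n-2)!}e^{-nt}, \qquad \cR_q(\rho_t\|\pi) = \frac{q\kappa_2}{2}e^{-2t} + \sum_{n\geq 3}\frac{q^n-q}{q-1}\frac{\kappa_n}{n!}e^{-nt},
\end{align*}
and to show that each remainder series is $O(e^{-3t})$, respectively $O_q(e^{-3t})$, by pulling out a factor $e^{-3t}$ and bounding what is left uniformly in $t$ once $t$ is large. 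The only genuine input is the analyticity of $K_Y$, which controls the growth of the cumulants.

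The key step: by \Cref{prop: k_y is finite} (via \Cref{lem:analytic_H}) the series $\sum_{n\geq 1}\frac{\kappa_n}{n!}z^n$ has radius of convergence at least $\epsilon_0 > 0$, so by the Cauchy–Hadamard formula, for every $\epsilon' \in (0,\epsilon_0)$ there is $C_{\epsilon'} > 0$ with $\bigl|\tfrac{\kappa_n}{n!}\bigr| \leq C_{\epsilon'}(\epsilon')^{-n}$ for all $n \geq 1$. Writing $\frac{\kappa_n}{n(n-2)!} = (n-1)\frac{\kappa_n}{n!}$, the KL remainder is bounded in absolute value by $C_{\epsilon'}\sum_{n\geq 3}(n-1)\bigl(e^{-t}/\epsilon'\bigr)^{n}$; for $t$ large enough that $e^{-t}/\epsilon' \leq \tfrac12$, the series $\sum_{n\geq 3}(n-1)x^{n-3}$ is bounded by an absolute constant for $x\leq\tfrac12$, so the whole remainder is at most $(\mathrm{const})\cdot e^{-3t}$, giving the first claim with explicit burn-in $t \geq \log(2/\epsilon')$.

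For the Rényi remainder the extra ingredient is the elementary estimate $\frac{q^n-q}{q-1} = q(1 + q + \cdots + q^{n-2}) \leq (n-1)q^{n-1}$ for $q > 1$ and $n \geq 2$. Combining with the coefficient bound, the remainder is dominated by $\frac{C_{\epsilon'}}{q}\sum_{n\geq 3}(n-1)\bigl(qe^{-t}/\epsilon'\bigr)^{n}$, which is $O(e^{-3t})$ once $t \geq \log(2q/\epsilon')$, the implied constant depending on $q$; hence $O_q(e^{-3t})$.

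I do not anticipate a real obstacle: the argument is a routine application of Cauchy–Hadamard plus a geometric-tail estimate. The only points needing care are (i) using analyticity of $K_Y$, not merely finiteness of every cumulant, to get the exponential bound on $\kappa_n/n!$; (ii) tracking the polynomial factor $(n-1)$ in the KL coefficients and the factor $q^{n}$ in the Rényi coefficients, both harmless once the geometric ratio is forced below $1$; and (iii) recording the explicit threshold on $t$, which is exactly the finite burn-in time advertised in the introduction.
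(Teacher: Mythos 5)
Your argument is correct and follows essentially the same route as the paper's proof: both rest on the analyticity of $K_Y$ near the origin (\cref{prop: k_y is finite} via \cref{lem:analytic_H}), Cauchy--Hadamard to obtain geometric bounds on the series coefficients, and a tail estimate once $e^{-t}$ is forced below the radius of convergence. The only (cosmetic) difference is bookkeeping: the paper applies Cauchy--Hadamard directly to the coefficients of $H(z) = zK_Y'(z)-K_Y(z)$, which has the same convergence radius as $K_Y$, whereas you bound $\kappa_n/n!$ and absorb the polynomial factors $(n-1)$ and $q^{n}$ by hand --- both yield the same $O(e^{-3t})$ and $O_q(e^{-3t})$ conclusions.
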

\begin{proof}
    \cref{lem:analytic_H} implies that the series for $K_Y$ centered at zero has convergence radius $\epsilon$, for some $\epsilon > 0$. Since the derivative of a series has the same radius of convergence, we obtain that 
    \begin{align*}
        H(z) := z K_Y'(z) - K_Y(z) = \sum_{n\geq 2} \frac{\kappa_n}{n(n-2)!} z^n.
    \end{align*}
    has convergence radius $\epsilon$ as well.
    Hence, by the Cauchy-Hadamard theorem, $\frac{1}{\epsilon} \geq \limsup_{n \to \infty} (|c_n|^{1/n})$, where $c_n := \frac{\kappa_n}{n(n-2)!}$.
    
    This implies that for all $0 < \epsilon' < \epsilon$, there exists a constant $C_{\epsilon'} > 0$ such that for all $n \geq 0$, $|c_n| \leq C_{\epsilon'}/(\epsilon')^n$. Consequently, for all $z \in \mathbb{C}$ with $|z| < 1/\epsilon'$,
    \begin{align}
        |H(z) - \frac{\kappa_2}{2} z^2| = \big|\sum_{n = 3}^{+\infty} \frac{\kappa_n}{n(n-2)!} z^n \big| \leq C_{\epsilon'} \sum_{n = 3}^{+\infty} \big(\frac{|z|}{\epsilon'} \big)^n = C_{\epsilon'} \frac{\big(\frac{|z|}{\epsilon'} \big)^3}{1-\frac{|z|}{\epsilon'}}
    \end{align}
    Using \cref{eq:KL_K_Y}, we get that for any constant $\gamma > 0$, if $t \geq - \log \epsilon' + \gamma$ (or equivalently, $e^{-t} \leq \epsilon' e^{-\gamma}$),
    \begin{align}
        |\kl{\rho_t}{\pi} - \frac{\kappa_2}{2} e^{-2t}| \leq C_{\epsilon'} \frac{\big(\frac{e^{-t}}{\epsilon'} \big)^3}{1-\frac{e^{-t}}{\epsilon'}} = C_{\epsilon'} \frac{e^{-3t}}{(\epsilon')^3(1-e^{-\gamma})} = O(e^{-3t}),
    \end{align}
    which concludes the proof for the KL divergence. For the Rényi divergence, the proof is analogous (note that in that case the series $\frac{1}{q-1}K_Y(q z) - \frac{q}{q-1}K_Y(z)$ has convergence radius $\epsilon/q$).
\end{proof}

\section{Details on the numerical simulations} \label{sec:details_sim}
To run the simulations in \Cref{sec:simulations}, we discretized the interval $[-\pi,\pi)$ in $n=2000$ equispaced points. Let $h = 2\pi/n$. For each algorithm and initialization, we construct sequences ${(x_k)}_{k \geq 0}$, where $x_k \in \R^{n}$ represents the normalized log-density at each point. We let $v_* \in \R^{n}$ be the (non-normalized) energy of the target distribution, obtained by evaluating $V_*$ at the discretization points. Similarly, $\nabla v_*, \Delta v_* \in \R^{n}$ are the evaluations of $\nabla V_*$ and $\Delta V_*$ at the $n$ points (note that $\nabla V_*$ is a scalar because the distributions are one-dimensional).

We used the following discretizations for the Fisher-Rao, Wasserstein and Wasserstein-Fisher-Rao gradient flows:
\begin{enumerate}[label=(\roman*)]
    \item Fisher-Rao GF: We use mirror descent in log-space. The update reads:
    \begin{align*}
        \tilde{x}_{k+1} &\gets x_{k} + \epsilon (-v_*-x_k), \\
        x_{k+1} &\gets \tilde{x}_{k+1} - \log\bigg(\sum_{i=1}^{n} e^{-\tilde{x}_{k+1}^i} \bigg).
    \end{align*}
    \item Wasserstein GF: We approximate numerically the gradient and the laplacian of the log-density:
    \begin{align} 
    \begin{split} \label{eq:wasserstein_updates}
        \forall i \in [n], \qquad (\nabla x_k)^i &\gets (x_k^{i+1}-x_k^{i-1})/(2h), \\
        \forall i \in [n], \qquad (\Delta x_k)^i &\gets (x_k^{i+1}+x_k^{i-1}-2x_k^i)/h^2, \\
        x_{k+1} &\gets x_k + \epsilon (\Delta v_* + \Delta x_k + (\nabla v_* + \nabla x_k) \nabla x_k).
    \end{split}
    \end{align}
    We use periodic boundary conditions, so that the first discretization point is adjacent to the last one for the purposes of computing derivatives.
    \item Wasserstein-Fisher-Rao GF: We combine the two previous updates. Letting $\nabla x_k$ and $\Delta x_k$ be as in \cref{eq:wasserstein_updates}, we have
    \begin{align*}
        \tilde{x}_{k+1} &\gets x_{k} + \epsilon (-v_*-x_k + \Delta v_* + \Delta x_k + (\nabla v_* + \nabla x_k) \nabla x_k), \\
        x_{k+1} &\gets \tilde{x}_{k+1} - \log\bigg(\sum_{i=1}^{n} e^{-\tilde{x}_{k+1}^i} \bigg).
    \end{align*}
\end{enumerate}
We used stepsizes $\epsilon = \num{2.5e-6}$ and $\epsilon = \num{1e-6}$ for the experiments on target distributions (1) and (2), respectively. The slopes in \cref{table:slopes} are obtain by taking $0 < t_1 < t_2$ and computing
\begin{align*}
\frac{\log(\kl{\rho_{t_2}}{\pi})-\log(\kl{\rho_{t_1}}{\pi})}{t_2-t_1}.
\end{align*}
We use different values for $t_1$ and $t_2$ for each target distribution; $t_1$ and $t_2$ must be large enough to capture the asymptotic slope of the curve, but not too large to avoid numerical errors. For all the curves corresponding to target $\pi_1$, we take $t_1 = 7.0$ and $t_2 = 7.5$. For target $\pi_2$, we take: for FR, $t_1 = 6.875$ and $t_2 = 7.0$; for WFR, $t_1 = 1.875$ and $t_2 = 2.0$; for W, $t_1 = 2.75$ and $t_2 = 2.875$. 

\end{document}